\begin{document}
\makeatletter

\def\endofproofmark{$\Box$}

\def\RR{I\!\!R}
\def\NN{I\!\!N}

\def\endofproofmark{$\Box$}

 \title{The Fractional Local Metric Dimension of Graphs
            }

\author{ Hira Benish, Muhammad Murtaza, Imran Javaid*\\
{\it Centre for Advanced Studies in Pure and Applied Mathematics,}\\
{\it Bahauddin Zakariya University Multan, Pakistan}\\
Email:  {\tt hira\_benish@yahoo.com}, {\tt mahru830@gmail.com},\\
{\tt imran.javaid@bzu.edu.pk}.
 }

\date{}
\maketitle

\pagestyle{myheadings}
\newtheorem{Theorem}{Theorem}[section]
\newtheorem{Lemma}[Theorem]{Lemma}
\newtheorem{Corollary}[Theorem]{Corollary}
\newtheorem{Proposition}[Theorem]{Proposition}
\newtheorem{Observation} [Theorem] {Observation}
\newtheorem{Remark}[Theorem]{Remark}
\newtheorem{Remarks}[Theorem]{Remarks}
\newtheorem{Example}[Theorem]{Example}
\newtheorem{Examples}[Theorem]{Examples}
\newtheorem{Definition}[Theorem]{Definition}
\newtheorem{Problem}[Theorem]{}
\newtheorem{Conjecture}[Theorem]{Conjecture}
\newtheorem{Question}[Theorem]{Question}

\def\frameqed{\framebox(5.2,6.2){}}
\def\deshqed{\dashbox{2.71}(3.5,9.00){}}
\def\ruleqed{\rule{5.25\unitlength}{9.75\unitlength}}
\def\myqed{\rule{8.00\unitlength}{12.00\unitlength}}
\def\qed{\hbox{\hskip 6pt\vrule width 7pt height11pt depth1pt\hskip 3pt}
\bigskip}
\newenvironment{proof}{\trivlist\item[\hskip\labelsep{\bf Proof}:]}{\hfill
 $\frameqed$ \endtrivlist}
\newcommand{\COM}[2]{{#1\choose#2}}

\textwidth=11.4cm \textheight=18.1cm \thispagestyle{empty} \null

\begin{abstract}
The fractional versions of graph-theoretic invariants multiply the range of applications in scheduling, assignment and operational research problems.
In this paper, we introduce the fractional version of local metric dimension of graphs. The local resolving neighborhood $L(xy)$ of an edge $xy$ of a graph $G$ is the set of those vertices in $G$ which
resolve the vertices $x$ and $y$. A function $f:V(G)\rightarrow[0, 1]$ is a local resolving function of $G$ if $f(L(xy))\geq1$ for all edges $xy$ in $G$. The minimum value of $f(V(G))$ among all
local resolving functions $f$ of $G$ is the fractional local metric
dimension of $G$. We study the properties and bounds of fractional local metric dimension of graphs and give some characterization results. We determine the fractional local metric dimension of strong and cartesian product of graphs.\\
\noindent Keywords: Local metric dimension; Fractional local metric
dimension; Strong product of graphs; Cartesian product of graphs.\\
2010 {\it Mathematics Subject Classification.} 05C12.
\end{abstract}

\section{Introduction and Terminology}
Resolving sets and metric dimension of a graph were introduced by Slater \cite{9} and Harary and Melter \cite{har} independently. Currie {\it et al.} \cite{CO} initiated the concept of fractional metric dimension and defined it as the optimal solution of linear relaxation of the integer programming problem of the metric dimension of graphs. The fractional metric dimension problem was further studied by Arumugam and Mathew \cite{AM} in 2012. The authors provided a sufficient condition for a connected graph $G$ whose fractional metric dimension is $\frac{|V(G)|}{2}$. The fractional metric dimension of graphs and graph products has also been studied in \cite{AM, FLW, FW, FW2, KS, EY}.

Okamoto {\it et al.} \cite{10} initiated the study of distinguishing adjacent vertices in a graph $G$ rather than all the vertices of $G$ by distance. This motivated the study of local resolving sets and local metric dimension in graphs. In this paper, we introduce the fractional version of local metric dimension of graph. We study the local fractional metric dimension of some graphs and establish some bounds on the fractional local metric dimension
of graphs. We also determine the fractional local metric dimension of strong and cartesian product of graphs.

Let $G=(V(G),E(G))$ be a finite, simple and connected graph with $V(G)$ and $E(G)$ be the vertex set and the edge set of $G$, respectively. The edge
between two vertices $u$ and $v$ is denoted by $uv$. If two vertices $u$ and $v$ are joined by an edge then they are
called adjacent vertices, denoted by $u\sim v$. $N_G(u)=\{v \in
V(G):vu\in E(G)\}$ and $N_G [u] = N (u) \cup\{u\}$ are called the {\it open neighborhoods}
and the {\it closed neighborhoods} of a vertex $u$, respectively. For a subset
$U$ of $V (G)$, $N_G (U ) = \{v \in V (G) :uv\in E(G);u\in
U\}$ is the open neighborhood of $U$ in $G$. The {\it distance} between
any two vertices $u$ and $v$ of $G$ is the shortest length of a path
between $u$ and $v$, denoted by $d(u,v)$. Two distinct vertices
$u,v$ are {\it adjacent twins} if $N[u]=N[v]$ and {\it non-adjacent
twins} if $N(u)=N(v)$. Adjacent twins are called {\it true twins}
and non-adjacent twins are called {\it false twins}. For two distinct vertices $u$ and $v$ in $G$, $R(u,v)=\{x\in V(G):
d(x,u)\ne d(x,v)\}$. A vertex set $W\subseteq V(G)$ is called a {\it resolving set} of $G$ if $W\cap
R(u,v)\neq\emptyset$ for any two distinct vertices $u,v\in V(G)$. The
minimum cardinality of a resolving set of $G$ is called the {\it
metric dimension} of $G$. Let $f$ be a function such that $f$ assigns a number
between 0 and 1 to each vertex of $G$ i.e.,
$f:V(G)\rightarrow[0,1]$. The function $f$ is called a {\it
resolving function} of $G$ if $f(R(u,v))\geq 1$ for any two distinct
vertices $u$ and $v$ in $G$. The minimum value of $f(V(G))$ among all
resolving functions $f$ of $G$ is called the {\it
fractional metric dimension} of $G$, denoted by $dim_f(G)$.

A vertex set $W \subset V(G)$ is called a {\it local
resolving set} of $G$ if $W \cap R(u,v)\neq \emptyset$ for any two adjacent vertices $u,v\in V(G)$. The minimum cardinality of
a local resolving set is called the {\it local metric dimension} of
$G$ and it is denoted by $ldim(G)$. A local resolving set of order
$ldim(G)$ is called a {\it local metric basis} of $G$. For $uv\in E(G)$, we define {\it the local resolving neighborhood}
as $L(uv)=\{x\in V(G);d(u,x)\neq d(v,x)\}$. $L(uv)=V(G)$, for all
$uv\in E(G)$, if and only if $ldim(G)=1$. In \cite{10}, it was shown
that $ldim(G)=1$ if and only if $G$ is a bipartite graph. Hence,
$L(uv)=V(G)$ for all $uv\in E(G)$ if and only if $G$ is a bipartite
graph. Now, we define the fractional
local metric dimension of a graph as follows;
\begin{Definition}
A function $f:V\rightarrow [0,1]$ is a local resolving function
$LRF$ of $G$ if $f(L(uv))\geq1$ for all $uv\in E(G)$, where
$f(L(uv))=\sum\limits_{x\in L(uv)}f(x)$. The weight of local
resolving function $f$ is defined as $|f|=\sum\limits_{v\in
V(G)}f(v)$. The minimum weight of a local resolving function of $G$ is
called the fractional local metric dimension $ldim_f(G)$ of $G$.
\end{Definition}

This paper is organized as follows: in Section 2, we characterize the
graphs $G$ with the fractional local metric dimension $\frac{|V(G)|}{2}$ and give bounds on the fractional local metric dimension of graphs. We study the fractional local metric dimension of some families of graphs and also discuss the differences between the fractional metric dimension and the fractional local metric dimension of some families of graph. In Section 3, we study the fractional local metric dimension of strong and cartesian product of graphs. We establish bounds on the fractional local metric dimension of these graph products.



\section{Characterization Results and Bounds on $ldim_f(G)$}
In a connected graph $G$, since every resolving set is a local resolving set, therefore every resolving function is also a local
resolving function but every local resolving function is not a resolving function. Thus
$$ldim_f(G)\leq dim_f(G)$$
Since, the characteristic function of a minimal local resolving set is an $LRF$ of $G$, therefore
$$1\leq ldim_f(G)\leq
ldim(G)\leq n-1.$$
Thus, if a graph $G$ has $ldim(G)=1$, then $ldim_f(G)=1$. We have the following result:
\begin{Lemma}\label{b1}
Let $G$ be a graph of order $n\geq 2$, then
$ldim_{f}(G)=1$ if and only if $G$ is a bipartite graph.
\end{Lemma}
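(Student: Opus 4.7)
The plan is to handle the two implications separately. The forward direction will follow almost for free from the fact, recorded in the paragraph preceding the statement, that $L(uv) = V(G)$ for every edge $uv$ of a bipartite graph $G$. The converse will reduce to the classical BFS characterisation of bipartiteness after a short argument about the support of an optimal $LRF$.

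For the ``if'' direction, assuming $G$ is bipartite, I would exhibit an $LRF$ of weight exactly $1$: fix any vertex $v_0 \in V(G)$ and take $f$ to be its indicator, i.e.\ $f(v_0) = 1$ and $f(x) = 0$ for $x \neq v_0$. Since $L(uv) = V(G)$ for every edge $uv$, we have $f(L(uv)) = f(V(G)) = 1$, so $f$ is an $LRF$ of weight $1$. Combined with the universal lower bound $ldim_f(G) \geq 1$ noted just above, this yields $ldim_f(G) = 1$.

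For the ``only if'' direction, assume $ldim_f(G) = 1$ and let $f$ be an $LRF$ attaining this minimum; such an $f$ exists because the feasible set of $LRF$s is a compact polytope inside $[0,1]^{|V(G)|}$. For each edge $uv$ the chain $1 \leq f(L(uv)) \leq f(V(G)) = |f| = 1$ must be equalities, which forces $f$ to vanish on $V(G) \setminus L(uv)$. Hence the support of $f$ lies inside $L(uv)$ for \emph{every} edge, and since $|f| = 1 > 0$ the support is non-empty; picking any $v_0$ with $f(v_0) > 0$ gives a vertex satisfying $d(v_0,u) \neq d(v_0,v)$ for every edge $uv \in E(G)$.

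This last property says precisely that every edge of $G$ crosses between two distinct distance-levels from $v_0$, so each level is an independent set; colouring $V(G)$ by the parity of distance from $v_0$ produces a proper $2$-colouring, showing $G$ is bipartite. The step I expect to be the main obstacle is the observation that an $LRF$ of weight exactly $1$ must be simultaneously supported inside every $L(uv)$; once this is in hand, the remainder of the converse is the standard BFS characterisation of bipartite graphs applied at the distinguished vertex $v_0$.
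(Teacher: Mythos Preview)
Your proof is correct. For the ``if'' direction you and the paper agree: both appeal to the fact, recorded just before the lemma, that $L(uv)=V(G)$ for every edge of a bipartite graph, together with the bound $ldim_f(G)\geq 1$.

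For the converse the two arguments diverge. The paper proceeds by contradiction: it asserts (without further comment) that $ldim_f(G)=1$ forces $|L(uv)|=n$ for every edge, then picks an odd cycle $C_s\subseteq G$ and claims that an edge of $C_s$ has $|L(v_iv_{i+1})|=s-1<n$. Your route is different: you show that an optimal LRF of total weight $1$ must be supported inside $\bigcap_{uv\in E(G)}L(uv)$, extract a single vertex $v_0$ from that support, and then colour $V(G)$ by the parity of $d(v_0,\cdot)$ to exhibit the bipartition directly. Your argument is constructive and makes the key step ``weight-$1$ LRF $\Rightarrow$ a vertex lying in every $L(uv)$'' completely explicit, whereas the paper leans on the stronger claim $|L(uv)|=n$ without justification (and its odd-cycle computation tacitly uses cycle distances rather than distances in $G$). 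On the other hand, once that claim is granted, the paper's route has the advantage of recycling the equivalence $|L(uv)|=n$ for all $uv \Leftrightarrow G$ bipartite that was already quoted before the lemma, rather than reproving half of it via BFS.
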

\begin{proof}
The sufficiency is immediate using the bounds given above. Conversely,
let $G$ be a graph with $ldim_f(G)=1$, then $|L(uv)|=n$ for all
$uv\in E(G)$. Suppose $G$ is not a bipartite graph and $G$ contains
an odd cycle $C_s=\{v_i: v_i\sim v_{i+1}, 1\le i \le s, v_{s+1}=v_1 \}$, where $s\leq n$ is odd. Note that
$|L(v_iv_{i+1})|=s-1$ for all $v_iv_{i+1}\in E(C_s)$, $i\in
\{1,2,...,s\}$, a contradiction. Hence, $G$ is a bipartite graph.
\end{proof}
Although there is a striking difference between fractional metric dimension and fractional local metric dimension of graphs, however the same results hold for the local metric dimension of graph when graph has true twin vertices. Let $G$ be a graph and $uv\in E(G)$, then $d(u, x) = d(v, x)$ for all
$x\in V(G)-\{u,v\}$ if and only if $u$ and $v$ are true twins. We have the following result about the local resolving neighborhood of true twin vertices:
\begin{Lemma}\label{twin}
Let $G$ be a graph and $uv\in E(G)$. Then
$\{u,v\}\subseteq L(uv)$. Moreover, we have $L(uv)=\{u,v\}$ if and
only if $u$ and $v$ are true twins.
\end{Lemma}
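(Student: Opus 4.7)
The plan is to split the lemma into its two assertions and handle each by directly unpacking the definition $L(uv)=\{x\in V(G):d(u,x)\neq d(v,x)\}$, then appealing to the characterization of true twins stated in the sentence immediately preceding the lemma.

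For the containment $\{u,v\}\subseteq L(uv)$, I would simply observe that since $uv\in E(G)$, we have $d(u,v)=d(v,u)=1$, while $d(u,u)=d(v,v)=0$. Hence $d(u,u)\neq d(v,u)$ and $d(u,v)\neq d(v,v)$, so both $u$ and $v$ belong to $L(uv)$ by definition. This step is a one-liner and presents no real difficulty.

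For the equivalence $L(uv)=\{u,v\}$ iff $u,v$ are true twins, the strategy is to reduce the statement to the fact recalled just before the lemma: namely, that for $uv\in E(G)$, one has $d(u,x)=d(v,x)$ for every $x\in V(G)\setminus\{u,v\}$ if and only if $u$ and $v$ are true twins. For the forward direction, if $L(uv)=\{u,v\}$, then by definition every $x\notin\{u,v\}$ satisfies $d(u,x)=d(v,x)$, so the cited fact yields $N[u]=N[v]$. Conversely, if $u$ and $v$ are true twins, then the same cited fact gives $d(u,x)=d(v,x)$ for all $x\in V(G)\setminus\{u,v\}$, which means no such $x$ lies in $L(uv)$; combined with the first part of the lemma, this forces $L(uv)=\{u,v\}$.

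The main (and only) conceptual point is the reliance on the preceding characterization of true twins via equal distances to all other vertices. If one preferred a self-contained argument, the obstacle would be proving the nontrivial direction of that characterization, i.e.\ showing that when $uv\in E(G)$ and $d(u,x)=d(v,x)$ for every $x\neq u,v$, then $N(u)\cap V\setminus\{v\}=N(v)\cap V\setminus\{u\}$; this follows because any neighbor $w\neq v$ of $u$ satisfies $d(u,w)=1$, forcing $d(v,w)=1$ and hence $w\in N(v)$, and symmetrically. Since the lemma's statement allows us to quote this fact, no further work is needed and the proof is essentially a definition chase.
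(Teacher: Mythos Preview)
Your proposal is correct and follows essentially the same approach as the paper: the paper's proof is a single sentence noting that $L(uv)=\{u,v\}$ if and only if $d(u,x)=d(v,x)$ for all $x\in V(G)\setminus\{u,v\}$, and then implicitly invokes the true-twin characterization stated just before the lemma. You have simply unpacked this one-liner with the obvious distance computations and made both directions explicit, which is entirely appropriate.
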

\begin{proof}
The proof simply follows from the fact that $L(uv)=\{u,v\}$ if and only if $d(u,x)=d(v,x)$ for all $x\in V(G)\setminus \{u,v\}$.
\end{proof}
Given a graph $H$ and a family of graphs $\mathcal{I}=\{I_v\}_{v\in
V(H)}$, indexed by $V(H)$, their {\em generalized lexicographic
product}, denoted by $H[\mathcal{I}]$, is defined as the graph with
the vertex set $ V(H[\mathcal{I}])=\{(v,w):v\in V(H)\textup{ and }
w\in V(I_v)\} $ and the edge set $ E(H[\mathcal{I}])
=\{(v_1,w_1)(v_2,w_2):v_1v_2\in E(H), \textup{or }v_1=
v_2\textup{ and }w_1w_2\in E(I_{v_1})\}.$
We state the result
as follows:
\begin{Theorem}\label{mthm}
Let $G$ be a connected graph of order $n\geq 2$. Then the following
statements are pairwise equivalent.

  {\rm(i)} $ldim_f(G)=\frac{n}{2}$.

  {\rm(ii)} Each vertex in $G$ has a true twin.

  {\rm(iii)} There exist a graph $H$ and a family of graphs $\mathcal{I}=\{I_v\}_{v\in
V(H)}$, where $I_v$ is a non-trivial complete graph, such that $G$ is
isomorphic to $H[\mathcal I]$.
\end{Theorem}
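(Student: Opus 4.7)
The plan is to prove the three implications (ii) $\Rightarrow$ (iii), (iii) $\Rightarrow$ (ii), and (i) $\Leftrightarrow$ (ii), using Lemma \ref{twin} as the main technical tool throughout.

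For (ii) $\Leftrightarrow$ (iii), I would first observe that having $N[u]=N[v]$ defines an equivalence relation on $V(G)$. Each equivalence class $C$ induces a clique (since $u \in N[u]=N[v]$ forces $u \sim v$ for distinct $u,v \in C$), and whenever some vertex of one class is adjacent to some vertex of another, all pairs across the two classes are adjacent (again because $N[u]=N[v]$ on each side). Taking $H$ as the quotient graph on the set of twin classes and $I_v = K_{|C_v|}$ yields $G \cong H[\mathcal{I}]$. Condition (ii) is then exactly the assertion that $|C_v|\geq 2$ for every class, equivalently that each $I_v$ is non-trivial. Conversely, in $H[\mathcal{I}]$ with non-trivial complete fibers, any two vertices sharing the same first coordinate are true twins, giving (iii) $\Rightarrow$ (ii).

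For (ii) $\Rightarrow$ (i), the constant function $f \equiv 1/2$ is an LRF because Lemma \ref{twin} forces $|L(uv)|\geq 2$ on every edge, so $f(L(uv))\geq 1$; this yields $ldim_f(G)\leq n/2$. For the matching lower bound, each true twin pair $\{u,v\}$ is an edge with $L(uv)=\{u,v\}$ by Lemma \ref{twin}, producing the constraint $f(u)+f(v)\geq 1$ for every LRF $f$. Summing this inequality over all pairs inside a single twin class $C$, each vertex of $C$ appears $|C|-1$ times, whence $\sum_{x \in C} f(x) \geq |C|/2$. Adding over the partition of $V(G)$ into twin classes gives $|f|\geq n/2$.

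The main obstacle is (i) $\Rightarrow$ (ii), which I would handle by contrapositive. Suppose some vertex $v_0$ has no true twin; I claim one can perturb $f \equiv 1/2$ downward at $v_0$ to strictly beat $n/2$. By Lemma \ref{twin}, every edge $v_0 u$ satisfies $L(v_0 u) \supsetneq \{v_0,u\}$, so $|L(v_0 u)|\geq 3$; and for any edge $uv$ not incident to $v_0$ with $v_0 \in L(uv)$, again $L(uv)\supsetneq \{u,v\}$ forces $|L(uv)|\geq 3$ via Lemma \ref{twin} applied to $uv$. Thus $v_0$ appears only in local resolving neighborhoods of size at least three, so replacing $f(v_0)=1/2$ by $f(v_0)=0$ preserves $f(L(uv))\geq 1$ on every edge of $G$. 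This produces an LRF of weight $(n-1)/2 < n/2$, contradicting (i). The crux is the uniform slack guaranteed by Lemma \ref{twin}: whenever a vertex lacks a true twin, every local resolving neighborhood containing it is strictly larger than the defining pair, which is precisely what allows the required downward perturbation.
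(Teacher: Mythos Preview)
Your proof is correct and follows essentially the same approach as the paper: both use Lemma~\ref{twin} to drive the contrapositive for (i)~$\Rightarrow$~(ii) via the perturbation $f(v_0)=0$, construct $H$ as the quotient by the true-twin equivalence relation for (ii)~$\Rightarrow$~(iii), and obtain the lower bound $|f|\ge n/2$ by summing the pairwise constraints $f(u)+f(v)\ge 1$ over each twin class. The only cosmetic difference is that the paper proves the cycle (i)~$\Rightarrow$~(ii)~$\Rightarrow$~(iii)~$\Rightarrow$~(i), whereas you establish (ii)~$\Leftrightarrow$~(iii) and (i)~$\Leftrightarrow$~(ii) separately.
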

\begin{proof}
(i) $\Rightarrow$ (ii) Suppose (i) holds. If there exists a vertex $u$ in $G$ such that $u$ does not have a true twin, then
the following function $f: V(G)\rightarrow[0,1]$,
$$f(x)=\left\{\begin{array}{ll}
                                       0, & \textup{if }x=u, \\
                                       \frac{1}{2}, & \textup{if }x\ne u,
                                     \end{array}\right.
$$
is a local resolving function of $G$ by Lemma \ref{twin}, which implies that
$ldim_f(G)\le\frac{n-1}{2}$, a contradiction.\\
(ii) $\Rightarrow$ (iii)
Suppose (ii) holds. For   $x,y\in V(G)$, define $x\equiv y$ if and
only if $x=y$ or $x, y$ are true twins. It is clear that $\equiv$ is an
equivalence relation. Suppose $O_{1}, \ldots, O_{m}$ where $m\le n$ the equivalence classes. Then the induced subgraph on each
$O_i$, where $i\in{1,...,m}$ denoted also by $I_{O_i}$, is a non-trivial null graph  or a
non-trivial complete graph. Let $H$ be the graph with the vertex set
$\{O_1,\ldots,O_m\}$, where two distinct vertices $O_i$ and $O_j$
are adjacent if  there exist $x\in O_i$ and $y\in O_j$ such that $x
$ and $y$ are adjacent in $G$. It is routine to verify that $G$ is
isomorphic to $H[\mathcal I]$, where $\mathcal
I=\{I_{O_i}:i=1,\ldots,m\}$.\\
(iii) $\Rightarrow$ (i) Suppose (iii) holds. For $v\in V(H)$, write
$$
V(I_v)=\{w_v^1,\ldots,w_v^{s(v)}\}.
$$
Where $|I_v|=s(v)$. Then $s(v)\ge 2$, and $(v,w_v^i)$ and $(v,w_v^j)$ are true twins in
$H[\mathcal I]$, where $1\le i<j\le s(v)$. Let $h$ be a local resolving
function of $H[\mathcal I]$ with $|h|=ldim_f(H[\mathcal I])$. By
Lemma \ref{twin}, we get
\begin{equation*}
  h((v,w_v^i))+h((v,w_v^j))\ge 1\quad\textup{for }1\le i<j\le s(v),
\end{equation*}
which implies that $\sum_{k=1}^{s(v)}h(v,w_v^k)\ge\frac{s(v)}{2}$, and so
$ldim_f(G)=ldim_f(H[\mathcal I])=|h|=\sum_{v\in V(H)}\sum_{k=1}^{s(v)}h((v,w_v^k))
 \ge \sum_{v\in V(H)}\frac{s(v)}{2} =\frac{|V(H[\mathcal I])|}{2} =\frac{n}{2}.$
\end{proof}
The join graph $G_1+G_2$ is the graph obtained from $G_1$ and $G_2$ by
joining each vertex of $G$ with every vertex of $H$. Note that, if
each vertex in $G_i$ has a true twin for $i\in\{1,2\}$, then each
vertex in $G_1+G_2$ has a true twin. We have the following result:
\begin{Corollary}\label{c2}
Let $\Theta$ denotes the collection of all connected graphs $G$ with
$ldim_f(G)=\frac{|V(G)|}{2}$. If $G_1,G_2\in \Theta$, then
$G_1+G_2\in \Theta$.
\end{Corollary}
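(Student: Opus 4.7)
The plan is to reduce the corollary to Theorem \ref{mthm} via the equivalence (i) $\Leftrightarrow$ (ii), so that membership in $\Theta$ becomes the combinatorial condition that every vertex has a true twin. Once this translation is made, the statement becomes a purely local check about closed neighborhoods in the join, which is essentially the parenthetical remark preceding the corollary.

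First I would invoke Theorem \ref{mthm}: the hypotheses $G_1, G_2 \in \Theta$ mean that every vertex of $G_1$ has a true twin in $G_1$, and likewise for $G_2$. The goal then reduces to showing that every vertex of $G_1 + G_2$ has a true twin in $G_1 + G_2$, after which another application of Theorem \ref{mthm} (in the direction (ii) $\Rightarrow$ (i)) places $G_1 + G_2$ in $\Theta$.

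Next I would carry out the closed-neighborhood computation. Fix $u \in V(G_1)$ and let $u' \in V(G_1)$ be a true twin of $u$, so $N_{G_1}[u] = N_{G_1}[u']$. By the definition of the join, $N_{G_1+G_2}[u] = N_{G_1}[u] \cup V(G_2)$ and $N_{G_1+G_2}[u'] = N_{G_1}[u'] \cup V(G_2)$, whence these two closed neighborhoods coincide in $G_1 + G_2$. Thus $u$ and $u'$ remain true twins in $G_1 + G_2$. By symmetry, the same argument applied to $G_2$ produces a true twin in $G_1 + G_2$ for every vertex of $G_2$.

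No step here should be an obstacle; the only thing to be careful about is that a true twin in $G_i$ stays a true twin after joining (true twins are preserved because each vertex of $G_1$ acquires exactly the same new neighbors, namely all of $V(G_2)$, and vice versa, so the symmetric difference of closed neighborhoods is unchanged). With that observation, condition (ii) of Theorem \ref{mthm} holds for $G_1 + G_2$, and the conclusion $ldim_f(G_1 + G_2) = \frac{|V(G_1+G_2)|}{2}$ follows immediately, i.e., $G_1 + G_2 \in \Theta$.
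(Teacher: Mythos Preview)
Your proof is correct and follows exactly the route the paper intends: the paper states the corollary without a formal proof, relying on the sentence immediately preceding it (``if each vertex in $G_i$ has a true twin for $i\in\{1,2\}$, then each vertex in $G_1+G_2$ has a true twin'') together with Theorem~\ref{mthm}. Your closed-neighborhood computation in the join is precisely the justification of that sentence, so your argument is the same as the paper's, just written out in full.
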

The next result is a generalization of Theorem \ref{mthm}. The
clique of a graph $G$ is a complete subgraph in $G$.
\begin{Theorem}\label{t2}
Let $G$ be a connected graph of order $n$ and $W_1,W_2,....,W_k$ be independent cliques in $G$ with $|W_i|\geq3$ for all $i$, $(1\leq i\leq k)$. Then
$ldim_f(G)=\sum\limits_{i=1}^k\frac{|V(W_i)|}{2}$ if and only if for all $uv\in E(G)\setminus E(W_i)$, $L(xy)\subseteq L(uv)$ for some $xy\in E(W_i)$ for some $i$, $(1\leq i\leq k)$.
\end{Theorem}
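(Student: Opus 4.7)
The plan is to prove both directions of the equivalence by focusing on the candidate function $f:V(G)\to[0,1]$ defined by $f(v)=\tfrac{1}{2}$ for $v\in \bigcup_i V(W_i)$ and $f(v)=0$ otherwise, paralleling the proof of Theorem \ref{mthm}. Interpreting ``independent cliques'' as vertex-disjoint, pairwise non-adjacent cliques of mutually true twins, Lemma \ref{twin} yields the key structural identity $L(xy)=\{x,y\}$ for every $xy\in E(W_i)$, and this anchors both the upper bound on $|f|$ and the lower bound on any local resolving function.

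For sufficiency ($\Leftarrow$), I will verify that $f$ is an $LRF$. If $xy\in E(W_i)$ then $f(L(xy))=f(x)+f(y)=1$; if $uv\in E(G)\setminus \bigcup_i E(W_i)$, the domination hypothesis supplies some $xy\in E(W_i)$ with $L(xy)\subseteq L(uv)$, whence $f(L(uv))\geq f(L(xy))\geq 1$. This gives $ldim_f(G)\leq \sum_i|V(W_i)|/2$. For the matching lower bound, take an arbitrary $LRF$ $g$ and sum the constraints $g(L(xy))=g(x)+g(y)\geq 1$ over all $\binom{|V(W_i)|}{2}$ edges of $W_i$; this produces $(|V(W_i)|-1)\sum_{v\in W_i} g(v)\geq \binom{|V(W_i)|}{2}$, and hence $\sum_{v\in W_i} g(v)\geq |V(W_i)|/2$. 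Summing over the pairwise disjoint cliques yields $|g|\geq \sum_i|V(W_i)|/2$.

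For necessity ($\Rightarrow$), assume $ldim_f(G)=\sum_i|V(W_i)|/2$. Tightness in the preceding chain of inequalities forces any optimal $LRF$ $g$ to satisfy $\sum_{v\in W_i}g(v)=|V(W_i)|/2$, $g(v)=0$ outside $\bigcup_i W_i$, and $g(x)+g(y)=1$ for every pair $xy$ in $W_i$; since $|V(W_i)|\geq 3$, this linear system has the unique solution $g\equiv \tfrac{1}{2}$ on each $W_i$, so $g=f$. The $LRF$ property of $f$ at each $uv\in E(G)\setminus \bigcup_i E(W_i)$ now reads $|L(uv)\cap \bigcup_i W_i|\geq 2$. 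To upgrade this to the theorem's domination statement, I invoke the twin-equidistance observation (for true twins $w,w'\in W_i$ and any $z\notin W_i$, $d(w,z)=d(w',z)$): when $u,v\notin W_i$, this forces $L(uv)\cap W_i$ to be either empty or all of $W_i$; combined with non-adjacency between distinct $W_i$, this rules out the splitting of the two witnesses between different cliques and yields $L(xy)=\{x,y\}\subseteq L(uv)$ for some $xy\in E(W_i)$.

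The main obstacle is this final localization step: ensuring that the two vertices of $L(uv)\cap \bigcup_i W_i$ cannot lie in distinct cliques. Here, twin-equidistance combined with the non-adjacency part of ``independent cliques'' is what forces each $W_i$ to contribute either $0$ or all of its $\geq 3$ vertices to $L(uv)$ when $uv$ is disjoint from it, so that reaching the threshold $2$ automatically concentrates the witnesses in a single clique and produces the desired dominating edge $xy\in E(W_i)$.
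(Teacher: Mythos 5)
Your argument hinges entirely on reading ``independent cliques'' as pairwise non-adjacent cliques each consisting of mutually true twins, so that Lemma \ref{twin} gives $L(xy)=\{x,y\}$ for every clique edge. Under that reading your proof is sound, and in several places more careful than the paper's: the paper simply takes for granted that the optimal $LRF$ is the function equal to $\frac{1}{2}$ on $\bigcup_i V(W_i)$ and $0$ elsewhere, and its lower-bound paragraph (``suppose that $f$ is a local resolving function of $W_i$ and not a local resolving function of $G$\dots'') establishes nothing, whereas your summation of $g(x)+g(y)\ge 1$ over the ${|W_i|\choose 2}$ clique edges, the resulting uniqueness of the optimal $LRF$ when $|W_i|\ge 3$, and the twin-equidistance localization at the end are genuine arguments.

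The gap is that this reading is stronger than what the theorem is meant to assume. The paper's own Corollary applies Theorem \ref{t2} to the lollipop graph $L_{m,n}$ with $W_1=K_m$; there the attachment vertex $a$ of $K_m$ is not a true twin of the remaining clique vertices (it has the path as extra neighborhood), so for an edge $ab$ of the clique $L(ab)$ contains all path vertices and is not $\{a,b\}$. Your two load-bearing steps then fail: the identity $L(xy)=\{x,y\}$ is false for clique edges incident to $a$; the edge-summation no longer yields $\sum_{v\in W_i}g(v)\ge |V(W_i)|/2$, because a vertex outside $W_i$ can lie in $L(xy)$ for up to $\lfloor |V(W_i)|^2/4\rfloor$ clique edges while a clique vertex occurs in only $|V(W_i)|-1$ of them, so for $|V(W_i)|\ge 4$ the inequality can leak weight to $V(G)\setminus V(W_i)$; and in the necessity direction the conclusion ``$L(uv)$ meets some clique in at least two vertices'' is no longer equivalent to ``$L(xy)\subseteq L(uv)$ for some clique edge $xy$.'' So you have proved a correct but narrower statement. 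To cover the theorem as the paper actually uses it you would need either to establish the lower bound without the true-twin identity (the paper does not do this either), or to make the true-twin hypothesis explicit and then handle applications such as the lollipop graph by a separate argument.
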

\begin{proof}
Let $G$ be a graph with $ldim_f(G)=\sum\limits_{i=1}^k\frac{|V(W_i)|}{2}$, then there exists a local resolving function $f:V(G)\rightarrow [0,1]$ defined as:
$$f(v) =\left\{
  \begin{array}{ll}
    \frac{1}{2}\,\,\,\     & \mbox{if}\,\,\,v\in V(W_i), 1\leq i \leq k ,\\
      0        \,\,\,\     & \mbox{otherwise}.
  \end{array}
\right.
$$
$f(L(uv))\geq 1$ for all $uv\in E(G)\setminus E(W_i)$, for all $i$, $1\leq i \leq k$ is possible only when $L(xy)\subseteq L(uv)$ for some $xy\in E(W_i)$, for some $i$, $(1\leq i \leq k)$, since $f$ assigns 0 to the vertices of $V(G)\setminus V(W_i)$ for all $i$, $1\leq i \leq k$.\\
Conversely, suppose that for all $uv\in E(G)\setminus E(W_i)$, $L(xy)\subseteq L(uv)$ for some $xy\in E(W_i)$, for some $i$, $(1\leq i\leq k)$. Let $f:V(G)\rightarrow [0,1]$ be the function defined as:
$$f(v) =\left\{
  \begin{array}{ll}
    1/2\,\,\,\     & \mbox{if}\,\,\,v\in V(W_i), 1\leq i \leq k ,\\
      0        \,\,\,\     & \mbox{otherwise}.
  \end{array}
\right.
$$
It is clear that $f(L(uv))\geq1$ for all $uv\in E(G)$, since $L(xy)\subseteq L(uv)$. Hence $f$ is a local resolving function of $G$ and $ldim_f(G)\leq \sum\limits_{i=1}^k\frac{|V(W_i)|}{2}$. To show that $\sum\limits_{i=1}^k\frac{|V(W_i)|}{2}\leq ldim_f(G)$, suppose that $f$ is local resolving function of $W_i$ and not a local resolving function of $G$. Then there exist $uv\in E(G)$ such that $f(L(uv))<1$. This leads to a contradiction to our supposition that $L(xy)\subseteq L(uv)$. Hence, $ldim_f(G)= \sum\limits_{i=1}^k\frac{|V(W_i)|}{2}$.
\end{proof}
A lollipop graph $L_{m,n}$ is a graph obtained by joining a complete graph $K_m$ to a path $P_n$ with an edge.
\begin{Corollary}
Let $L_{m,n}$ be a lollipop graph with $m\geq 3$ and $n\geq 2$. Then $ldim_f(L_{m,n})=\frac{m}{2}$.
\end{Corollary}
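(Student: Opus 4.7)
The plan is to apply Theorem~\ref{t2} with $k=1$ and $W_1$ equal to the vertex set of the complete graph $K_m$ sitting inside $L_{m,n}$. Label the vertices of $K_m$ as $u_1,\ldots,u_m$ and the path vertices as $v_1,\ldots,v_n$, with $u_mv_1$ the joining edge. Since $m\geq 3$, the clique $W_1$ satisfies the size condition of Theorem~\ref{t2}, so what remains is to verify the local-neighborhood containment hypothesis.

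Concretely, for each edge $uv\in E(L_{m,n})\setminus E(K_m)$ I need to exhibit an edge $xy\in E(K_m)$ with $L(xy)\subseteq L(uv)$. A single choice will serve uniformly: take $xy=u_1u_2$. Because $u_m$ is the only vertex of $K_m$ adjacent to something outside $K_m$, the vertices $u_1$ and $u_2$ have identical closed neighborhoods in $L_{m,n}$, so they are true twins, and Lemma~\ref{twin} gives the minimal local resolving neighborhood $L(u_1u_2)=\{u_1,u_2\}$.

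It therefore suffices to check that $\{u_1,u_2\}\subseteq L(uv)$ whenever $uv$ is either the bridge $u_mv_1$ or a path edge $v_kv_{k+1}$ with $1\leq k\leq n-1$. Using that every shortest path from $u_i$ ($i\in\{1,2\}$) to a path vertex $v_j$ goes through $u_m$ and has length $j+1$, a short distance computation gives $d(u_i,u_m)=1\neq 2=d(u_i,v_1)$ and $d(u_i,v_k)=k+1\neq k+2=d(u_i,v_{k+1})$, so both $u_1$ and $u_2$ lie in $L(uv)$ in each case. This establishes the required containment.

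With the hypothesis of Theorem~\ref{t2} verified, the conclusion $ldim_f(L_{m,n})=\frac{|V(K_m)|}{2}=\frac{m}{2}$ follows at once. There is essentially no obstacle beyond the routine distance bookkeeping; the one point worth flagging is that $u_m$ itself is \emph{not} a true twin of the other clique vertices (it is the unique neighbor of $v_1$ in $K_m$), which is why the twin pair used as $xy$ must be chosen from $\{u_1,\ldots,u_{m-1}\}$ rather than arbitrarily within $K_m$.
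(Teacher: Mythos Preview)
Your proposal is correct and follows essentially the same approach as the paper: apply Theorem~\ref{t2} with the single clique $W_1=V(K_m)$ and verify the containment hypothesis for all non-clique edges. Your version is in fact more careful than the paper's one-line proof---you explicitly identify the true-twin pair $u_1,u_2$ via Lemma~\ref{twin}, handle the bridge edge $u_mv_1$ as well as the path edges, and carry out the distance computation, whereas the paper simply asserts the containment and cites Theorems~\ref{t2} and~\ref{mthm}.
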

\begin{proof}
Since for all $uv\in E(P_n)$, $L(xy)\subseteq L(uv)$ for some $xy\in E(K_m)$, hence by Theorem \ref{t2} and Theorem \ref{mthm}, $ldim_f(L_{m,n})=\frac{m}{2}$.
\end{proof}
Let $G$ be a graph of order $n$, we define
$l(G)=\min\{|L(uv)|: uv\in E(G)\}.$
\begin{Remark}\label{rg}
Let $r(G)=\min\{|R(u,v)|: u,v\in V(G)\}$ as defined in \cite{FLW}.
Note that for any graph $G$, $r(G)\leq l(G)$.
\end{Remark}
In the following result, we express the fractional local metric dimension of $G$ in
terms of $l(G)$.
\begin{Proposition}\label{1}
Let $G$ be a graph, then $ldim_{f}(G)\leq
\frac{|V(G)|}{l(G)}.$
\end{Proposition}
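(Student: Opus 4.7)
The plan is to construct an explicit local resolving function by assigning the constant value $\frac{1}{l(G)}$ to every vertex of $G$, and then check that this function is admissible and has the claimed weight.

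First I would define $f: V(G) \rightarrow [0,1]$ by $f(v) = \frac{1}{l(G)}$ for every $v \in V(G)$. To justify that $f$ takes values in $[0,1]$, I would invoke Lemma \ref{twin}, which guarantees $\{u,v\} \subseteq L(uv)$ for any edge $uv$, so $|L(uv)| \geq 2$ and hence $l(G) \geq 2$; in particular $\frac{1}{l(G)} \in [0,1]$.

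Next I would verify the local resolving condition. For any edge $uv \in E(G)$, since $|L(uv)| \geq l(G)$ by the very definition of $l(G)$, we have
\[
f(L(uv)) = \sum_{x \in L(uv)} \frac{1}{l(G)} = \frac{|L(uv)|}{l(G)} \geq 1.
\]
Thus $f$ is a local resolving function of $G$, so $ldim_f(G) \leq |f|$.

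Finally I would compute the weight: $|f| = \sum_{v \in V(G)} \frac{1}{l(G)} = \frac{|V(G)|}{l(G)}$, yielding the desired inequality. There is essentially no obstacle here beyond checking that the constant function lies in $[0,1]$, which is handled by Lemma \ref{twin}; the rest is bookkeeping.
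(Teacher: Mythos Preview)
Your proof is correct and follows essentially the same approach as the paper: define the constant function $f\equiv\frac{1}{l(G)}$, verify $f(L(uv))=\frac{|L(uv)|}{l(G)}\geq 1$ for every edge, and conclude $ldim_f(G)\leq |f|=\frac{|V(G)|}{l(G)}$. Your version is actually a bit more careful in explicitly checking that $\frac{1}{l(G)}\in[0,1]$ via Lemma~\ref{twin}, a detail the paper leaves implicit.
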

\begin{proof}
Let $f:V(G)\rightarrow[0,1]$, defined by
$f(x)= \frac{1}{l(G)}.$ For any two adjacent vertices $x$ and
$y,$ we have $f(L(xy))=\frac{|L(xy)|}{l(G)}\geq1$. Clearly, $f$ is
a local resolving function of $G$. Hence, $ldim_{f}(G)\leq
|f|=\frac{|V(G)|}{l(G)}$.
\end{proof}
By Lemma \ref{twin}, $\{u,v\}\in L(u,v)$ so it is clear that $|L(uv)|\geq 2$ for all $uv\in E(G)$.
We have the following corollary of Proposition \ref{1}.
\begin{Corollary}\label{cro1n}
For a graph $G$ of order $n$, $ldim_f(G)\leq \frac{n}{2}$.
\end{Corollary}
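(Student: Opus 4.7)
The plan is to obtain the bound as an immediate consequence of Proposition \ref{1} combined with Lemma \ref{twin}. Proposition \ref{1} already tells us that $ldim_f(G) \leq \frac{n}{l(G)}$, so it suffices to show that $l(G) \geq 2$, and then the bound $\frac{n}{l(G)} \leq \frac{n}{2}$ follows by monotonicity of $\frac{1}{x}$ on positive reals.

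First I would recall that Lemma \ref{twin} established $\{u,v\} \subseteq L(uv)$ for every edge $uv \in E(G)$. In particular, $|L(uv)| \geq 2$ for every $uv \in E(G)$. Taking the minimum over all edges (which is meaningful since $G$ is connected of order $n \geq 2$, so $E(G) \neq \emptyset$), we conclude that
\[
l(G) = \min\{|L(uv)| : uv \in E(G)\} \geq 2.
\]

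Next, I would invoke Proposition \ref{1}, which guarantees $ldim_f(G) \leq \frac{n}{l(G)}$. Substituting $l(G) \geq 2$ yields $ldim_f(G) \leq \frac{n}{l(G)} \leq \frac{n}{2}$, as desired.

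There is essentially no obstacle here; the entire content rests on the observation $|L(uv)| \geq 2$, which is already in hand from Lemma \ref{twin}. The only small point worth flagging in the write-up is the implicit hypothesis that $G$ has at least one edge, so that $l(G)$ is well defined; this is guaranteed by the standing assumption that $G$ is connected of order $n \geq 2$ used throughout the section.
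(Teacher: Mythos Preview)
Your proposal is correct and mirrors the paper's own reasoning exactly: the paper notes (just before stating the corollary) that Lemma~\ref{twin} gives $\{u,v\}\subseteq L(uv)$, hence $|L(uv)|\geq 2$ for every edge, and then deduces the bound as an immediate corollary of Proposition~\ref{1}. There is nothing to add or change.
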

\begin{Lemma}\label{al}
Let $G$ be a graph and $U$ be a subset of $V(G)$ with cardinality
$|V(G)|-ldim(G)+1,$ there exists an edge $xy\in E(G)$ such that
$L(xy)\subseteq U.$
\end{Lemma}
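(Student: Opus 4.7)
The plan is to argue via the complement $V(G)\setminus U$, whose cardinality is forced to be strictly smaller than $ldim(G)$, and then invoke the minimality in the definition of local metric dimension.

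First I would compute $|V(G)\setminus U|=|V(G)|-(|V(G)|-ldim(G)+1)=ldim(G)-1$. Since the local metric dimension $ldim(G)$ is the minimum cardinality of a local resolving set, any set of cardinality $ldim(G)-1$ fails to be a local resolving set. Applied to $V(G)\setminus U$, this failure means, by the definition of a local resolving set (equivalently, using $L(xy)=R(x,y)$ for every edge $xy$), that there exists an edge $xy\in E(G)$ with
\[
(V(G)\setminus U)\cap L(xy)=\emptyset.
\]

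Finally, rewriting this as $L(xy)\subseteq U$ gives the conclusion. There is no real obstacle here; the argument is essentially a one-step counting/complementation observation, whose only subtlety is the standard reformulation that $W$ is a local resolving set if and only if $W\cap L(uv)\neq\emptyset$ for every $uv\in E(G)$.
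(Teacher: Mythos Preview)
Your proof is correct and follows essentially the same approach as the paper: both arguments pass to the complement $V(G)\setminus U$ of size $ldim(G)-1$ and use that it cannot be a local resolving set, hence some $L(xy)$ misses it entirely. The only cosmetic difference is that the paper phrases this by contradiction (assuming no such edge exists and deducing that $V(G)\setminus U$ would be a local resolving set), whereas you argue directly.
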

\begin{proof}
Suppose there exists a subset $U$ with cardinality $|V(G)|-ldim(G)+1$ such
that $L(xy)\nsubseteq U,$ for all $xy\in E(G)$. Then $L(xy)\cap \{V(G)\backslash U\}\neq \emptyset$. So
$V(G)\backslash U$ is a local resolving set of $G.$ Therefore,
$ldim(G)-1=|V(G)\backslash U|< ldim(G),$ a contradiction.
\end{proof}

\begin{Theorem}\label{a2}
Let $G$ be a graph. Then $l(G)=|V(G)|-1$ if and only if $G$ is
isomorphic to an odd cycle.
\end{Theorem}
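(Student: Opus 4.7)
Both directions are handled separately. The backward direction is a direct computation in odd cycles; the forward direction is the substantive part and proceeds by showing that $G$ must be a connected $2$-regular non-bipartite graph.

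For the backward direction, I would take $G = C_{2k+1}$ with vertices $v_0, v_1, \ldots, v_{2k}$ in cyclic order and use the formula $d(v_a, v_b) = \min(|a-b|, 2k+1 - |a-b|)$. A direct check shows that for every edge $v_i v_{i+1}$ the unique vertex equidistant from both endpoints is the antipodal vertex $v_{i+k+1}$ (indices mod $2k+1$), so $|L(v_i v_{i+1})| = 2k = |V(G)| - 1$ uniformly, and hence $l(C_{2k+1}) = |V(G)| - 1$.

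For the converse, suppose $l(G) = n - 1$ with $n = |V(G)|$. Two preliminary observations: first, $G$ is not bipartite, because otherwise $d(u,w)$ and $d(v,w)$ have opposite parity for every edge $uv$ and every vertex $w$, forcing $L(uv) = V(G)$ and $l(G) = n$; second, since $|L(uv)| \geq n-1$, each edge has at most one equidistant vertex, so in particular $|N(u) \cap N(v)| \leq 1$ for every edge $uv$, as common neighbors are always equidistant. Moreover, Lemma~\ref{al} combined with the hypothesis forces $ldim(G) \leq 2$: any $(n-1)$-subset of $V(G)$ must contain some $L(xy)$, whence $n-1 \leq |L(xy)| \leq n - ldim(G) + 1$; combined with $ldim(G) \geq 2$ (from non-bipartiteness) this gives $ldim(G) = 2$.

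The main task is to show that $G$ is $2$-regular. Assume for contradiction that $\deg(v) \geq 3$ with distinct neighbors $u_1, u_2, u_3$. If some pair $u_i u_j$ is an edge (say $u_1 u_2$), then $v$ is equidistant from $u_1, u_2$, and analyzing $u_3$'s adjacencies produces a second equidistant vertex: if $u_3$ is adjacent to one of $u_1, u_2$ (say $u_1$), then $u_2, u_3$ are both common neighbors of $v$ and $u_1$, violating the bound on $|N \cap N|$; if $u_3$ is adjacent to neither, then $d(u_1, u_3) = d(u_2, u_3) = 2$ through $v$, making $u_3$ a second equidistant vertex on $u_1 u_2$. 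The residual case — $u_1, u_2, u_3$ pairwise non-adjacent — is the main obstacle, since common-neighbor counting is inconclusive. Here I plan to leverage the tight constraint $ldim(G) = 2$ together with a shortest odd cycle through $v$ (from non-bipartiteness), which together constrain the BFS structure around $v$ enough to exhibit a second equidistant vertex on some incident edge. Ruling out pendants (i.e.\ showing $\delta(G) \geq 2$) follows a parallel idea: a pendant $p$ at $v$ satisfies $L(pv) = V(G)$ trivially, so the minimum is realised elsewhere, and the pendant's distances together with the odd-cycle structure produce some other edge with two equidistant vertices. Once $G$ is connected and $2$-regular it is a cycle, which must be odd by non-bipartiteness.
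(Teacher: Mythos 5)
Your backward direction and your preliminary reductions are fine: non-bipartiteness, the bound $|N(u)\cap N(v)|\le 1$ for edges, and the deduction $ldim(G)=2$ from Lemma \ref{al} (via $n-1\le |L(xy)|\le n-ldim(G)+1$) are all correct and correctly argued, as are the two subcases where some pair of neighbors of a degree-$\ge 3$ vertex is adjacent. But the proof has a genuine gap exactly where you admit it does: the case of a vertex $v$ with three pairwise non-adjacent neighbors $u_1,u_2,u_3$, and the case $\delta(G)=1$, are only announced ("I plan to leverage \dots the BFS structure around $v$") and never carried out. This is the crux of the theorem, not a routine residual case: for the edge $vu_1$, the other neighbors $u_2,u_3$ \emph{do} lie in $L(vu_1)$ (they are at distance $1$ from $v$ and $2$ from $u_1$), so no local counting around $v$ produces the needed second equidistant vertex, and some global use of an odd cycle is unavoidable. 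As written, the forward implication is not proved.

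For comparison, the paper avoids degree-by-degree analysis around an arbitrary vertex and instead anchors the argument on an odd cycle $C_p=x_1\cdots x_p$ guaranteed by non-bipartiteness: if $\Delta(G)\ge 3$ it takes a vertex $y$ outside the cycle adjacent to $x_p$ and observes that \emph{both} $x_p$ and $y$ are equidistant from the endpoints of the antipodal edge $x_{(p-1)/2}x_{(p+1)/2}$, so $|L(x_{(p-1)/2}x_{(p+1)/2})|\le n-2$, a contradiction; then $\Delta(G)=2$ forces $G$ to be an odd cycle. If you want to complete your proof, the cleanest route is to adopt this idea (choosing $C_p$ as a shortest odd cycle so that cycle distances agree with graph distances, and treating separately the possibility that the extra adjacency is a chord of $C_p$) rather than trying to control the BFS layers around a high-degree vertex, which does not obviously close.
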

\begin{proof}
It is easy to verify that $l(G)=|V(G)|-1$ when $G$ is an odd cycle. Conversely, let $G$ be a graph of order $n\geq 4$ and $l(G)=|V(G)|-1$. We further suppose that $G$ is not a bipartite graph, since $l(G)=n$ for a bipartite graph of order $n$. Thus $G$ contains an odd cycle. Let $C_p:x_1,x_2,...,x_p$ are the vertices of odd cycle, where $p\leq n$ is odd. Let $\Delta(G)$ be the maximum degree of $G$. We claim that $\Delta(G)=2$. Suppose to the contrary that $\Delta\geq3$, then odd cycle $C_p$ must be a proper subgraph of $G$. Since $G$ is connected, therefore there exists a vertex $y\in V(G)\setminus V(C_p)$ such that $y$ is adjacent to any vertex, say $x_p$ of $C_p$. Since $C_p$ is an odd cycle, therefore $d(x_p,x_{\frac{p-1}{2}})=d(x_p,x_{\frac{p+1}{2}})$. Thus $x_p,y\notin L(x_{\frac{p-1}{2}}x_{\frac{p+1}{2}})$. Hence $|L(x_{\frac{p-1}{2}}x_{\frac{p+1}{2}})|\leq n-2$ which is a contradiction. Hence $\Delta(G)=2$ and $G$ is isomorphic to an odd cycle.
\end{proof}
Using Lemma \ref{a2}, we have the following result:
\begin{Theorem}
Let $G$ be a graph of order $n$. Then $$ldim_f (G)\geq \frac{ n}{
n-ldim(G) + 1}.$$
\end{Theorem}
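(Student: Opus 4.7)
The plan is to apply Lemma \ref{al} not to a single clever subset but uniformly to every vertex subset of the critical size, and then extract the bound by a straightforward double counting on an arbitrary local resolving function.

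Set $k = ldim(G)$ and $m = n-k+1$. Let $f$ be any local resolving function of $G$ with $|f|=ldim_f(G)$. For every subset $U \subseteq V(G)$ with $|U| = m$, Lemma \ref{al} produces an edge $xy \in E(G)$ such that $L(xy) \subseteq U$; since $f$ is an LRF we then get
\[
  f(U) \;=\; \sum_{v \in U} f(v) \;\geq\; f(L(xy)) \;\geq\; 1.
\]
This inequality has the decisive feature that it holds simultaneously for \emph{all} $m$-subsets $U$, which is exactly what makes the averaging argument go through.

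Next I would sum the inequality $f(U) \geq 1$ over all $\binom{n}{m}$ subsets $U$ of cardinality $m$. The right-hand side becomes $\binom{n}{m}$. For the left-hand side, swapping the order of summation and using the fact that each vertex $v$ belongs to exactly $\binom{n-1}{m-1}$ of the $m$-subsets gives
\[
  \sum_{|U|=m} f(U) \;=\; \sum_{v \in V(G)} f(v)\,\bigl|\{U : |U|=m,\, v \in U\}\bigr| \;=\; \binom{n-1}{m-1}\,|f|.
\]
Combining the two sides yields $|f| \geq \binom{n}{m}/\binom{n-1}{m-1} = n/m$, i.e.
\[
  ldim_f(G) \;=\; |f| \;\geq\; \frac{n}{n - ldim(G) + 1}.
\]

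The argument is almost mechanical once the right move is identified, so the only real obstacle is the initial observation that Lemma \ref{al} should be invoked on every $m$-subset at once rather than on a cleverly chosen one; after that, the binomial identity $\binom{n}{m}/\binom{n-1}{m-1} = n/m$ delivers the stated bound. As a sanity check, when $ldim(G)=1$ this reproduces $ldim_f(G) \geq 1$ (consistent with Lemma \ref{b1}), and when $ldim(G)=n-1$ it gives $ldim_f(G) \geq n/2$, matching Corollary \ref{cro1n} and confirming that the bound is sharp at both extremes.
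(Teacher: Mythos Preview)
Your proof is correct and follows essentially the same approach as the paper: take an optimal local resolving function $f$, invoke Lemma~\ref{al} to get $f(U)\geq 1$ for every $m$-subset $U$ with $m=n-ldim(G)+1$, sum over all such subsets, and double count to obtain $\binom{n-1}{m-1}\,|f|\geq\binom{n}{m}$. Your write-up is in fact more explicit than the paper's, spelling out the swap of summation and the binomial simplification that the paper leaves implicit.
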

\begin{proof}
Write $s= n-ldim(G) + 1$. Suppose $f$ is a local resolving function
of $G$ with $|f|=ldim_f(G)$. Let $\tau=\{T: T\subset V(G),|T|=n-ldim(G) + 1\}$ and $|\tau|={|V(G)|\choose s}$. For
each $U\in \tau$, $f (U)\geq1$ by Lemma \ref{al}. Hence, $\sum\limits_{U\in \tau} f(U)\geq {n\choose s}.$ Since $\sum\limits_{U\in \tau}f(U)={{n-1}\choose {s-1}}|f|$, so we accomplish our
result.
\end{proof}
\begin{Theorem}
For every integer $\epsilon,\delta$, there exist graphs $G$ and $H$
such that $dim_f(G)-ldim_f(G)\geq \delta$ and $dim_f(H)-ldim_f(H)\leq
\epsilon$.
\end{Theorem}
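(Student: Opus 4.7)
The plan is to exhibit one explicit graph family for each of the two inequalities. Since the excerpt already establishes $ldim_f(G)\le dim_f(G)$, the difference $dim_f-ldim_f$ is always nonnegative, so I read the statement as being about nonnegative integers $\epsilon,\delta$.

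For the graph $G$ realizing $dim_f(G)-ldim_f(G)\ge\delta$, I would take $G=K_{n,n}$ with $n\ge\delta+1$. Lemma~\ref{b1} gives $ldim_f(G)=1$ immediately, since $K_{n,n}$ is bipartite. The remaining work is to show $dim_f(K_{n,n})=n$. The key observation is that whenever $u,v$ lie in the same partition class, every other vertex is at distance $1$ from both (if it lies in the opposite class) or at distance $2$ from both (if it lies in the same class); hence $R(u,v)=\{u,v\}$, so any fractional resolving function must satisfy $f(u)+f(v)\ge 1$ for every such pair. A short combinatorial argument (at most one vertex per class may receive weight strictly less than $\frac{1}{2}$, and doing so only increases the total weight on the remaining vertices of that class) forces $\sum_{x\in A}f(x)\ge n/2$ for each partition class $A$, while the constant function $\frac{1}{2}$ is easily verified to be a resolving function. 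Thus $dim_f(K_{n,n})=n$ and the gap is $n-1\ge\delta$.

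For the graph $H$ realizing $dim_f(H)-ldim_f(H)\le\epsilon$, I would take $H=K_m$ for any $m\ge 2$. Every pair of vertices of $K_m$ is a pair of true twins, so Lemma~\ref{twin} gives $L(uv)=\{u,v\}$ for every edge $uv$, and the same short distance computation shows $R(u,v)=\{u,v\}$ for every distinct pair of vertices. Hence the defining inequalities for a local resolving function and for a resolving function of $K_m$ are identical, so $ldim_f(H)=dim_f(H)=m/2$ (also consistent with Theorem~\ref{mthm} via condition (ii)). The gap is then $0\le\epsilon$, completing the construction.

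The only nontrivial step is the computation of $dim_f(K_{n,n})$, which is the main technical obstacle in the plan. Everything else reduces to a direct application of Lemmas~\ref{b1} and \ref{twin} together with the already-observed inequality $ldim_f\le dim_f$, and the $K_{n,n}$ computation itself is clean once one recognises the intra-class pairs as the binding constraints in the defining linear program.
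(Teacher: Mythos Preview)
Your proposal is correct. For the first inequality you use the same example as the paper, $G=K_{n,n}$; the only difference is that the paper simply cites $dim_f(K_{n,n})=n$ from \cite{AM}, whereas you supply the short computation directly. For the second inequality the approaches diverge: the paper takes $H=C_n$ with $n$ even, giving $dim_f(H)=\frac{n}{n-2}$ and $ldim_f(H)=1$, so the gap $\frac{2}{n-2}$ is strictly positive but can be made smaller than any prescribed $\epsilon$. Your choice $H=K_m$ collapses the local and global resolving constraints (every pair of vertices is adjacent), yielding a gap of exactly~$0$. Your route is more elementary and even covers the case $\epsilon=0$ that the even-cycle example does not; on the other hand, the paper's construction carries the additional information that the difference $dim_f-ldim_f$ can be a strictly positive quantity tending to zero, which is a slightly finer statement than the theorem as written requires.
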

\begin{proof}
For the first inequality, we consider complete bipartite graph $K_{n,n}$, for which $dim_f(K_{n,n})=n$ \cite{AM} and $ldim_f(K_{n,n})=1$.
The difference between fractional metric dimension and fractional local metric dimension is $n-1>\delta$,
where $\delta$ can be as large as we like. For the second inequality, we consider cyclic graph $C_n$ of even order for which $dim_f(C_n)=\frac{n}{n-2}$ \cite{AM}, and $ldim_f(C_n)=1$. The
difference between fractional metric dimension and fractional local
metric dimension is $\frac{2}{n-2}<\epsilon$, where $\epsilon$ can
be as small as we like.
\end{proof}
Let $G$ be the complete $k$-partite graph $K_{a_1,a_2,...,a_k}$, for
$k>2$, of order $n=\sum\limits_{i=1}^ka_i$. Let $V(G)$ be
partitioned into $k$-partite sets $V_1,V_2,...,V_k$, where
$|V_i|=a_i$ for $1\leq i\leq k$. Okamoto et al. proved that
$ldim(K_{a_1,a_2,...,a_k})=k-1$ \cite{10}.
\begin{Lemma}
Let $G$ be the complete $k$-partite graph $K_{a_1,a_2,...,a_k}$, for
$k>2$, of order $n=\sum\limits_{i=1}^ka_i$. Then
$ldim_f(K_{a_1,a_2,...,a_k})=k-1$.
\end{Lemma}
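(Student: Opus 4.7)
The plan is to bracket $ldim_f(K_{a_1,\ldots,a_k})$ between $k-1$ and $k-1$ by proving the upper and lower bounds separately. The upper bound is immediate from the general inequality $ldim_f(G)\le ldim(G)$ combined with the Okamoto et al.\ result $ldim(K_{a_1,\ldots,a_k})=k-1$ quoted just before the statement: the characteristic function of a local metric basis $W$ of cardinality $k-1$ is a local resolving function of weight $k-1$.

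For the lower bound I would begin with a structural computation of the local resolving neighborhoods. Fix an edge $uv$ with $u\in V_i$ and $v\in V_j$, $i\ne j$. A case analysis on the part containing $x$ shows that any $x\in V_\ell$ with $\ell\notin\{i,j\}$ satisfies $d(x,u)=d(x,v)=1$, while any $x\in V_i\cup V_j$ is at distance $0$ or $2$ from one of $u,v$ and at distance $1$ from the other. Consequently
\[
L(uv)\;=\;V_i\cup V_j.
\]
Setting $\alpha_i:=f(V_i)$ for an arbitrary local resolving function $f$, the $LRF$ condition reduces to the linear system
\[
\alpha_i+\alpha_j\;\ge\;1\qquad\text{for all }1\le i<j\le k,
\]
with $|f|=\sum_{i=1}^{k}\alpha_i$ to be minimised.

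The main obstacle is then to deduce $\sum_i\alpha_i\ge k-1$ from these pairwise constraints. I would proceed by showing that at most one $\alpha_i$ can be strictly less than $1$: assume some $\alpha_{i_0}<1$ and use the inequalities $\alpha_{i_0}+\alpha_j\ge 1$ to force $\alpha_j\ge 1-\alpha_{i_0}$ for every $j\ne i_0$, and then chain this with the pointwise bound $f(v)\le 1$ applied to the $a_{i_0}$ vertices of $V_{i_0}$ to extract the target bound.

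This is the step I expect to be delicate, because the pairwise constraints in isolation are tight at $\alpha_i\equiv\tfrac12$ with total only $k/2$; any proof that clears $k-1$ must therefore exploit additional structure beyond the raw $LP$ relaxation (for instance, a finer look at edges whose endpoints lie in small parts, or an inductive reduction peeling off one part at a time). I would scrutinise carefully whether the argument implicitly assumes bounds on the $a_i$, since that is precisely where the pairwise-sum $LP$ and the claimed value diverge.
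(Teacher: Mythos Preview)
Your structural computation is correct: for $u\in V_i$, $v\in V_j$ with $i\neq j$ one indeed has $L(uv)=V_i\cup V_j$, and the reduction to the part-weights $\alpha_i=f(V_i)$ subject to $\alpha_i+\alpha_j\ge 1$ for all $i\ne j$ captures the problem exactly. But the obstacle you flag at the end is not a gap in your method---it is a gap in the statement itself. The linear program you wrote down \emph{is} the fractional local metric dimension here, and its optimum is $k/2$, not $k-1$: summing $\alpha_i+\alpha_j\ge 1$ over all $\binom{k}{2}$ pairs gives $(k-1)\sum_i\alpha_i\ge\binom{k}{2}$, hence $|f|\ge k/2$; conversely the function $f(v)=\frac{1}{2a_i}$ for $v\in V_i$ is a local resolving function of weight exactly $k/2$. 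Concretely, $K_3=K_{1,1,1}$ already has $ldim_f=3/2\neq 2$, and $K_{2,2,2}$ (vertex-transitive with $l(G)=4$) has $ldim_f=6/4=3/2\neq 2$ by the paper's own result on vertex-transitive graphs. Since $k/2<k-1$ for every $k>2$, the lemma fails throughout its stated range.

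The paper's proof of the lower bound asserts that a local resolving function of weight $k-2$ ``will be possible only when $f$ assigns $0$ to all vertices of $V_r\cup V_s$'' for some pair of parts. That is integral reasoning accidentally applied to a fractional object and is simply false; the constant function above is a counterexample. So you should not search for extra structure to push the bound up to $k-1$: there is none, and the correct value is $ldim_f(K_{a_1,\ldots,a_k})=k/2$.
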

\begin{proof}
Firstly, we show that $ldim_f(G)\leq k-1$. It is clear that all
$xy\in E(K_{a_1,a_2,...,a_k})$ if and only if $x\in V_i$ and
$y\in V_j$, $i\neq j$ and $i,j\in \{1,2,...,k\}$. Note that for all $xy\in E(K_{a_1,a_2,...,a_k})$,
$L(xy)=V_i\cup V_j$. One of the possible choices of local resolving function $f$ of $G$ is that $f$ is defined as: $f$ assigns 1 to only one vertex of $V_i\cup V_j$ and 0 to all other vertices of $V_i\cup V_j$. This implies $f(L(xy))\ge 1$ for all $xy\in E(G)$ and $|f|=k-1$. Thus $ldim_f(G)\leq k-1$.

To prove $k-1\leq ldim_f(G)$, suppose on contrary that $ldim_f(G)= k-2$. The minimum weight $k-2$ of a function $f$ among all the local resolving functions of $G$ will be possible only when $f$ assigns 0 to all vertices of $V_r\cup V_s$, for some $r,s\in \{1,2,...,k\}$. This implies $f(L(xy))<1$ for $xy\in E(G)$ where $x\in V_r$ and $y\in V_s$, which is a contradiction. Hence $ldim_f(G)=k-1$.
\end{proof}
The {\it automorphism group} of a graph $G$ is the set of all permutations
of the vertex set of $G$ that preserve adjacencies and non-adjacencies
of vertices in $G$ and it is denoted by $\Gamma(G)$. A graph $G$ is {\it vertex-transitive} if its automorphism group
$\Gamma(G)$ acts transitively on the vertex set. The {\it stabilizer} of a vertex $v\in V(G)$, denoted by $\Gamma_v$, is defined as
$\Gamma_v=\{\pi\in\Gamma:\pi(v)=v\}$. The {\it index} of a subgroup is defined as the number of distinct cosets of the subgroup in that group. In a vertex-transitive graph $G$, for any two vertices $v$ and $w$ in $V(G)$, $\Gamma_v$ and
$\Gamma_w$ are isomorphic and the index of $\Gamma_v$ in $\Gamma(G)$ is
equal to the order of $V(G)$. For a vertex-transitive graphs, if $l(G)=r(G)$, then
$ldim_f(G)=dim_f(G)$. For example, an odd cycle of order $n$ is a
vertex-transitive graph and $l(C_n)=r(C_n)$ for odd $n$. Petersen graph is a vertex-transitive graph and
$l(G)=6=r(G).$ Therefore, $ldim_f(P)=\frac {5}{3}=dim_f(G).$ But in general, $ldim_f(G)\neq dim_f(G)$ for vertex-transitive graphs. For instance, hypercube $Q_n$ is a vertex-transitive graph with $dim_f(Q_n)=2\neq 1 = ldim_f(Q_n)$. In the following result, we give the
fractional local metric dimension of a vertex-transitive graph $G$ in terms of the parameter $l(G)$.
\begin{Theorem}\label{2}
Let $G$ be a vertex-transitive graph. Then
$ldim_{f}(G)=\frac{|V(G)|}{l(G)}.$
\end{Theorem}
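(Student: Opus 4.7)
The plan is to show equality by combining Proposition \ref{1} (which already gives $ldim_f(G) \le \frac{|V(G)|}{l(G)}$) with a matching lower bound derived from a symmetrization argument over the automorphism group.

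To set this up, I would start by observing that if $\pi \in \Gamma(G)$ and $f$ is any local resolving function, then $f \circ \pi$ is also a local resolving function of the same weight. The reason is that automorphisms preserve distances, so they map edges to edges and, more importantly, $\pi(L(uv)) = L(\pi(u)\pi(v))$ for every edge $uv$; hence the constraint $(f\circ\pi)(L(uv)) = f(L(\pi(u)\pi(v))) \ge 1$ holds for all edges.

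Next, starting from an optimal $f$ with $|f| = ldim_f(G)$, I would define the averaged function
\[
g(v) \;=\; \frac{1}{|\Gamma(G)|}\sum_{\pi\in\Gamma(G)} f(\pi(v)).
\]
This $g$ is a convex combination of local resolving functions, hence itself a local resolving function, and $|g| = |f|$ because summation and averaging over a group preserve total weight. By vertex-transitivity, for any $u,v\in V(G)$ there is some $\sigma\in\Gamma(G)$ with $\sigma(u)=v$, and a reindexing $\pi\mapsto\pi\sigma^{-1}$ shows $g(u)=g(v)$. Thus $g$ is the constant function $g\equiv c$ for some $c\in[0,1]$, and $|g| = c\,|V(G)|$.

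Finally, choose an edge $xy$ with $|L(xy)| = l(G)$. The LRF constraint for $g$ gives $c \cdot l(G) = g(L(xy)) \ge 1$, so $c \ge \frac{1}{l(G)}$, whence
\[
ldim_f(G) = |f| = |g| = c\,|V(G)| \;\ge\; \frac{|V(G)|}{l(G)}.
\]
Combined with Proposition \ref{1}, this yields the claimed equality. The only step requiring care is justifying that the action of $\Gamma(G)$ maps local resolving neighborhoods to local resolving neighborhoods; the rest is a standard averaging argument that I expect to go through routinely.
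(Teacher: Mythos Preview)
Your proof is correct and follows essentially the same symmetrization idea as the paper's argument: both exploit that summing (or averaging) an optimal local resolving function over the automorphism group yields, by vertex-transitivity, a constant contribution per vertex, which forces $|f|\ge |V(G)|/l(G)$ when tested against an edge realizing $l(G)$. The only cosmetic difference is that the paper sums the inequalities $f(L(\alpha(u)\alpha(v)))\ge 1$ over $\alpha\in\Gamma(G)$ and invokes the orbit--stabilizer relation $|\Gamma_v|\cdot|V(G)|=|\Gamma(G)|$ directly, whereas you package the same computation by first constructing the averaged function $g$ and observing it is constant; both routes are equivalent and equally rigorous.
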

\begin{proof}
Let $l(G)=p$, then there exists an edge
$uv\in E(G)$ such that $|L(uv)|=p$. Suppose
$L(uv)=\{r_1,r_2...,r_p\}$. Let $\alpha \in \Gamma(G)$,
$L(\alpha(u)\alpha(v))=\{\alpha(r_1),\alpha(r_2),...,\alpha(r_p)\}$.
Let $f$ be a local resolving function of $G$ with $ldim_f(G)=|f|$. Then
$$f(\alpha(r_1))+f(\alpha(r_2))+...+f(\alpha(r_p))=f(L(\alpha(u)\alpha(v)))\geq
1,$$ which implies that
$$\sum\limits_{\alpha\in
\Gamma(G)}(f(\alpha(r_1))+f(\alpha(r_2))+...+f(\alpha(r_p)))\geq
|\Gamma(G)|.$$ Since $G$ is vertex-transitive, we have
$$|\Gamma_{r_1}|.|f|+|\Gamma_{r_2}|.|f|+...+ |\Gamma_{r_p}|.|f |\geq
|\Gamma(G)|$$ which implies that $ldim_f(G)\geq \frac{|V(G)|}{p}$. By
Proposition \ref{1}, we have the required result.

\end{proof}
Let $G$ be a connected graph, for $v\in V(G)$, $G-v$ is known as the vertex deletion subgraph of
$G$ obtained by deleting $v$ from the vertex set of $G$ along with
its incident edges.
\begin{Proposition}\label{propcut}
Let $G$ be a graph and $v\in V(G)$, then $ldim_f(G)-1\leq ldim_f(G-v)$.
\end{Proposition}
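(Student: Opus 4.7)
The plan is to construct a local resolving function $f$ of $G$ with weight at most $ldim_f(G-v) + 1$ by augmenting an optimal local resolving function of $G-v$. Concretely, I would let $g$ be an optimal LRF of $G-v$, so $|g| = ldim_f(G-v)$, and define $f : V(G) \to [0,1]$ by $f(v) = 1$ and $f(u) = g(u)$ for $u \in V(G) \setminus \{v\}$. By construction, $|f| = |g| + 1 = ldim_f(G-v) + 1$, so the proof reduces to verifying that $f$ is a local resolving function of $G$.

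To verify this, I would split the edges of $G$ into two cases. First, if $xy \in E(G)$ is incident to $v$, then Lemma~\ref{twin} gives $v \in L_G(xy)$, so $f(v) = 1$ immediately yields $f(L_G(xy)) \geq 1$. Second, if $xy \in E(G-v)$, I would relate $L_G(xy)$ to $L_{G-v}(xy)$ and invoke the inequality $g(L_{G-v}(xy)) \geq 1$ that already holds in $G-v$; once the relevant containment is in hand, we obtain $f(L_G(xy)) \geq g(L_{G-v}(xy)) \geq 1$.

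The main obstacle is this second case. Since deleting $v$ can only increase distances, a vertex $u$ with $d_{G-v}(u,x) \neq d_{G-v}(u,y)$ may collapse to $d_G(u,x) = d_G(u,y)$ via a shortest path through $v$, so the naive inclusion $L_{G-v}(xy) \subseteq L_G(xy)$ can fail. The plan for handling this is to argue that whenever such a collapse occurs for some $u \in L_{G-v}(xy)$, the vertex $v$ itself must lie in $L_G(xy)$, so that $f(v)=1$ still covers the edge; alternatively, one chooses $g$ strategically so that its support avoids vertices susceptible to the collapse. Once this delicate point is resolved, we conclude $ldim_f(G) \leq |f| = ldim_f(G-v) + 1$, i.e., $ldim_f(G) - 1 \leq ldim_f(G-v)$, as required.
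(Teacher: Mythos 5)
Your construction is exactly the one the paper itself uses: extend an optimal LRF $g$ of $G-v$ to $G$ by setting $f(v)=1$, handle edges incident to $v$ via Lemma~\ref{twin}, and hope the remaining edges are covered because $g$ covers them in $G-v$. You correctly identify the crux --- restoring $v$ can only shorten distances, so $L_{G-v}(xy)\subseteq L_G(xy)$ may fail --- but neither of your proposed patches closes the gap, and the first one is false. Take $V(G)=\{x,y,v,u,w\}$ with edge set $\{xy,\,vx,\,vy,\,uv,\,uw,\,wy\}$. Then $G-v$ is the path $u,w,y,x$, which is bipartite, so $g$ with $g(u)=1$ and $g\equiv 0$ elsewhere is an optimal LRF of $G-v$, and $d_{G-v}(u,x)=3\ne 2=d_{G-v}(u,y)$, i.e.\ $u\in L_{G-v}(xy)$. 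In $G$, however, $d_G(u,x)=d_G(u,y)=2$ and $d_G(v,x)=d_G(v,y)=1$, so $L_G(xy)=\{x,y,w\}$ contains neither $u$ nor $v$; the lifted function gives $f(L_G(xy))=0<1$ and is not an LRF of $G$. So a collapse can occur without $v$ entering $L_G(xy)$. (The same example shows that the paper's own proof, which merely asserts that the extended function is an LRF, is incomplete at exactly this point.)

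Your second patch --- choosing $g$ with support avoiding the vertices susceptible to collapse --- is not carried out and is not obviously available: which vertices collapse depends on the edge $xy$, and there is no reason an optimal LRF of $G-v$ exists whose support dodges all of them simultaneously. In the example above the proposition's conclusion still holds ($ldim_f(G)=3/2$, $ldim_f(G-v)=1$), but it is witnessed by a function that is not of the proposed form. As written, your argument proves the bound only for edges incident to $v$ and for edges $xy$ with $L_{G-v}(xy)\subseteq L_G(xy)$; the remaining edges require a genuinely new idea or a different construction.
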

\begin{proof}
Let $f:V(G-v)\rightarrow [0,1]$ be a local resolving function of $G-v$ such
that $ldim_f(G-v)=|f|$. Consider a function $f^{\prime}:V(G)\rightarrow
[0,1]$ defined as:
$$f^{\prime}(u) =\left\{
  \begin{array}{ll}
    f(u),   \,\,\,\     & \mbox{if}\,\,\,u\neq v,\\
    1,   \,\,\,\     & \mbox{if}\,\,\,u=v.
  \end{array}
\right.$$ is a local resolving function of $G$ and $ldim_f(G)\leq
|f^{\prime}|$. Thus $ldim_f(G-v)=|f|=|f^{\prime}|-1\geq ldim_f(G)-1$.
\end{proof}
The fan graph $F_{1,n}$ of order $n + 1$ is defined as the join
graph $K_1 + P_n.$ Let $V(K_1)=\{u\}$ and
$V(P_n)=\{u_1,u_2,...,u_n\}$.
\begin{Lemma}
Let $F_{1,n}$ be a fan graph with $n\geq 3$, then
$$ldim_f(F_{1,n}) =\left\{
  \begin{array}{ll}
    2,   \,\,\,\     & \mbox{if}\,\,\,n=3, \\
    \frac{n}{3},   \,\,\,\     & \mbox{if}\,\,\,n\geq4.
  \end{array}
\right. $$
\end{Lemma}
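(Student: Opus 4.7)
The plan is to determine all local resolving neighbourhoods of $F_{1,n}$ and then exhibit matching primal and dual solutions to the underlying linear program. As a preliminary step, note that $d(u,u_i)=1$ and $d(u_i,u_j)=\min(|i-j|,2)$ for path vertices, so a short case check yields
\[
L(uu_i)=V(F_{1,n})\setminus\{u_{i-1},u_{i+1}\} \quad \text{and} \quad L(u_iu_{i+1})=\{u_{i-1},u_i,u_{i+1},u_{i+2}\}\cap V(F_{1,n}),
\]
where out-of-range indices are ignored. In particular $u\notin L(u_iu_{i+1})$, and the extremal sets $L(u_1u_2)$ and $L(u_{n-1}u_n)$ have size only $3$.

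For the upper bound when $n\ge 4$, I would try the uniform assignment $f(u)=0$ and $f(u_i)=1/3$ for each $i$, of total weight $n/3$. Every $L(u_iu_{i+1})$ contains at least three path vertices, and for $n\ge 5$ every $L(uu_i)$ contains at least $n-2\ge 3$ path vertices, so each LRF-inequality $f(L(e))\ge 1$ is verified. The boundary case $n=4$ needs a small modification to handle the slightly smaller sets $L(uu_2)$ and $L(uu_3)$. For $n=3$ I would instead exhibit an ad hoc function of weight $2$ — for example $f(u)=f(u_2)=1$ and $f(u_1)=f(u_3)=0$ — verifying the tight constraints $L(uu_2)=\{u,u_2\}$ and $L(u_iu_{i+1})=\{u_1,u_2,u_3\}$ directly.

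For the lower bound I would invoke LP duality: any non-negative edge weighting $(y_e)_{e\in E(F_{1,n})}$ satisfying $\sum_{e:\,v\in L(e)}y_e\le 1$ for every $v\in V(F_{1,n})$ gives $ldim_f(F_{1,n})\ge\sum_e y_e$. For $n\ge 4$ I would look for a symmetric feasible $y$ of total weight $n/3$ by placing weight $1/3$ on a carefully chosen family of edges so that every vertex accumulates dual weight exactly $1$. For $n=3$, a dual matching on the five edges together with a direct sum of the corresponding inequalities should produce the matching bound $2$.

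The main obstacle is constructing the dual fractional matching of total weight $n/3$. Because each $L(uu_i)$ excludes only two vertices, these sets overlap heavily and the apex $u$ belongs to every $L(uu_i)$; consequently, the weighted edges must be chosen so that $u$ is not over-covered while every path vertex receives dual weight exactly $1$. Reconciling the small extremal neighbourhoods $L(u_1u_2)$ and $L(u_{n-1}u_n)$ with the uniform bulk is the delicate bookkeeping, and it may force a case split on $n\bmod 3$.
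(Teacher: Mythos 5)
Your determination of the local resolving neighbourhoods is correct, and it is precisely this groundwork that exposes the real problem: the matching dual solutions you plan to construct do not exist, because the values claimed in the statement are not the optima of the linear program you have written down. For $n=3$ the function $f(u)=f(u_1)=f(u_2)=\tfrac{1}{2}$, $f(u_3)=0$ satisfies all three distinct constraints $f(\{u,u_2\})\geq 1$, $f(\{u,u_1,u_3\})\geq 1$ and $f(\{u_1,u_2,u_3\})\geq 1$, so $ldim_f(F_{1,3})\leq \tfrac{3}{2}<2$; placing dual weight $\tfrac{1}{2}$ on each of these three constraints shows the value is exactly $\tfrac{3}{2}$. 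For $n=4$ no ``small modification'' can rescue $\tfrac{4}{3}$: the optimum is again $\tfrac{3}{2}$, attained by $f(u)=f(u_2)=f(u_3)=\tfrac{1}{2}$ and certified by dual weights $\tfrac{1}{2}$ on $uu_2,uu_3$ and $\tfrac{1}{4}$ on $u_1u_2,u_3u_4$. Most seriously, for $n\geq 7$ the claimed value $\tfrac{n}{3}$ is not even an upper bound: each interior constraint $f(L(u_iu_{i+1}))\geq 1$ involves only a window of four consecutive path vertices, so a period-four pattern of weights $\tfrac{1}{2},\tfrac{1}{2},0,0$ along the path (with $f(u)=0$) is a local resolving function of weight roughly $\tfrac{n}{4}$. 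Concretely, for $n=7$ the assignment $f(u)=0$ and $(f(u_1),\dots,f(u_7))=(0,\tfrac{1}{2},\tfrac{1}{2},0,\tfrac{1}{2},\tfrac{1}{2},0)$ has weight $2<\tfrac{7}{3}$ and satisfies every constraint, since each $L(u_iu_{i+1})$ receives weight at least $1$ and each $L(uu_i)$ receives weight at least $2-\tfrac{1}{2}-\tfrac{1}{2}=1$.

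So the statement itself is false for $n=3$, $n=4$ and every $n\geq 7$; among all $n\geq 3$ it holds only for $n=5$ and $n=6$. The difficulty you flagged at $n=4$ and the ``delicate bookkeeping'' you anticipated in the dual are symptoms of this, not obstacles to be overcome. The paper's own argument fails at exactly this point: it asserts without justification that the only candidate optimal functions are the uniform ones ($\tfrac{1}{2}$ everywhere for $n=3$, $\tfrac{1}{3}$ on the path vertices for $n\geq 4$), which is precisely what the functions above contradict. If you want a correct result along these lines, your LP formulation does give $ldim_f(F_{1,n})=\tfrac{n}{3}$ for $n=5,6$, and for large $n$ the problem essentially reduces to fractionally covering the path by windows of length at most four, so the true value is asymptotic to $\tfrac{n}{4}$ rather than $\tfrac{n}{3}$.
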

\begin{proof}
Since $l(F_{1,3})=2$, therefore $ldim_f(F_{1,3})\leq 2$ by Proposition \ref{1}. Now, we show that
$2\leq ldim_f(F_{1,3})$. Since $l(F_{1,3})=2$ and $|L(xy)|\neq 4$ for any $xy\in E(F_{1,3})$. Thus a function $f:V(F_{1,3}) \rightarrow [0,1]$ is a local resolving function for $F_{1,3}$ if it assign 1/2 to each vertex of $F_{1,3}$. Otherwise there exists an edge $xy\in E(F_{1,3})$ such that $L(xy)< 1$. Hence $ldim_f(F_{1,3})=2$.\\
Let $F_{1,n}$ be a fan graph with $n\geq4$. Note that $\{u\}=V(K_1)$ does not locally resolve any $xy\in E(F_{1,n})$ for $x,y\neq u$. Let $f;V(F_{1,n})\rightarrow [0,1]$ is a local resolving function defined as:
$$f(v) =\left\{
  \begin{array}{ll}
    1/3,   \,\,\,\     & \mbox{if}\,\,\,v\neq u, \\
    0,    \,\,\,\     & \mbox{if}\,\,\,v=u.
  \end{array}
\right. $$
$f(L(xy))\geq 1$ for all $xy\in E(F_{1,n})$. Thus $|f|=\frac{n}{3}$. Hence $ldim_f(F_{1,n})\leq \frac{n}{3}$.\\
Now we show that $\frac{n}{3}\leq ldim_f(F_{1,n})$. Note that $l(F_{1,n})=3$ for $n\geq4$. $f$ is a local resolving function as defined above. If $f$ assigns 0 to any vertex from $V(P_n)$, then there exists an edge $xy\in E(F_{1,n})$ such that $f(L(xy))< 1$. Hence $ldim_f(F_{1,n})=\frac{n}{3}$ for $n\geq4$.
\end{proof}
\section{The Fractional Local Metric Dimension of Strong and Cartesian Product of Graphs}
The strong product of two graphs $G$ and $H$, denoted by $G\boxtimes
H$, is a graph with the vertex set $V(G\boxtimes H)=\{(u,v): u\in
V(G)\,\, and \,\, v\in V(H)\}$ and two vertices $(u_{1},v_{1})$ and
$(u_{2},v_{2})$ in $G\boxtimes H$ are adjacent if and only if
\begin{itemize}
  \item $u_{1}u_{2}\in E(G)$ and $v_{1}=v_{2}$ or
  \item $u_{1}=u_{2}$ and $v_{1}v_{2}\in E(H)$ or
  \item $u_{1}u_{2}\in E(G)$ and $v_{1}v_{2}\in E(H)$.
\end{itemize}
For a vertex $u\in V(G)$, the set of vertices $\{(u,v):v\in V(H)\}$
is called an $H$-layer and is denoted by $H^{u}$. Similarly, for a
vertex $v\in V(H)$, the set of vertices $\{(u,v):u\in V(G)\}$ is
called a $G-$layer and is denoted by $G^{v}$. Let $d_{G\boxtimes
H}((u_1,v_1),(u_2,v_2))$ denotes the distance between $(u_1,v_1)$ and
$(u_2,v_2)$. For $(u_1,v_1)(u_2,v_2)\in E(G\boxtimes H)$, the
local resolving neighborhood of edge $(u_1,v_1)(u_2,v_2)$ is denoted
by $L_{G\boxtimes H}((u_1,v_1)(u_2,v_2))$ and $L_G(u_1u_2)$
denotes the local resolving neighborhood of $u_1u_2\in E(G)$. The
following result gives the relationship between the distance of vertices in
$G\boxtimes H$ and the distance of vertices in graphs $G$ or $H$.
\begin{Remark}\label{strem}\cite{11}
Let $G$ and $H$ be two connected graphs. Then
$$d_{G\boxtimes H}((u_1,
v_1),(u_2, v_2)) = max\{d_G(u_1, u_2), d_H(v_1, v_2)\}.$$
\end{Remark}

\begin{Lemma}\label{sl}
Let $G$ and $H$ be two graphs of order $n_1\geq2$ and $n_2\geq2,$
respectively. Then
$$L_{G\boxtimes H}((u_i,v_j)(u_k,v_l))\subseteq\left\{
  \begin{array}{ll}
    V(G)\times L_H(v_jv_l), \,\,\ \mbox{if}\,\,\,i= k,\\
    L_G(u_iu_k)\times V(H),  \,\,\ \mbox{if}\,\,\,j=l,\\
    \{V(G)\times L_H(v_jv_l)\}\cup \{L_G(u_iu_k)\times V(H)\}\, otherwise.
  \end{array}
\right.$$
\end{Lemma}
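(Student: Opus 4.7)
The plan is to apply Remark \ref{strem} directly, which reduces any distance comparison in $G \boxtimes H$ to a comparison of maxima of distances in the factors. Fix an arbitrary vertex $(u,v) \in L_{G\boxtimes H}((u_i,v_j)(u_k,v_l))$. By the definition of the local resolving neighborhood together with Remark \ref{strem},
$$\max\{d_G(u,u_i),\, d_H(v,v_j)\} \neq \max\{d_G(u,u_k),\, d_H(v,v_l)\}.$$
I will split into the three cases of the lemma and deduce the required containment from this single inequality.

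For the case $i = k$, the two maxima share the argument $d_G(u,u_i) = d_G(u,u_k)$, so the inequality forces $d_H(v,v_j) \neq d_H(v,v_l)$, that is, $v \in L_H(v_jv_l)$. Hence $(u,v) \in V(G) \times L_H(v_jv_l)$. The case $j = l$ is handled symmetrically by interchanging the roles of the two coordinates.

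For the remaining case $i \neq k$ and $j \neq l$, I would argue by contrapositive. Assume $(u,v)$ lies in neither $V(G) \times L_H(v_jv_l)$ nor $L_G(u_iu_k) \times V(H)$; then $d_H(v,v_j) = d_H(v,v_l)$ and $d_G(u,u_i) = d_G(u,u_k)$ simultaneously, so both maxima are evaluated over identical pairs of numbers and must agree, contradicting the displayed inequality. Therefore $(u,v)$ belongs to the union, completing the case.

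I do not anticipate a genuine obstacle here; the proof is essentially a short chase through the $\max$-formula of Remark \ref{strem}. The only point worth recording is that equality in the lemma would fail in general, because $v \in L_H(v_jv_l)$ does not guarantee that $(u,v)$ resolves the edge whenever the $G$-coordinate distance dominates both maxima to a common value. This is why the statement asserts only a one-sided containment.
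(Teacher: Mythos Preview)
Your proof is correct and follows the same strategy as the paper: split into the three edge types and apply the $\max$-distance formula of Remark~\ref{strem}. Your contrapositive handling of the third case is in fact cleaner than the paper's version, which argues somewhat loosely by considering only resolvers of the special forms $(a,v_j)$ and $(u_i,b)$; taking an arbitrary $(u,v)$ as you do is the more rigorous way to establish the stated containment.
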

\begin{proof}
Let $(u_i, v_j)(u_k, v_l)\in E(G\boxtimes H)$. If $i = k,$ then
$v_jv_l\in E(H)$. Let $(u_i,b)\in L_{G\boxtimes H}((u_i,v_j)(u_i,v_l))$, then
$d_{G\boxtimes H}((u_i, b), (u_i, v_j))\neq d_{G\boxtimes H}((u_i, b),(u_i, v_l))$. By Remark \ref{strem}, we have $d_H(b, v_j)\neq d_H(b,
v_l)$, therefore $b\in L_H(v_jv_l)$. Thus $(u_i, b)\in \{V(G)\times
L_H(v_jv_l)\}$. Analogously, if $j = l,$ then $u_iu_k\in E(G)$. Let $(a,v_j)\in L_{G\boxtimes H}((u_i,v_j)(u_k,v_j))$, then $d_{G\boxtimes H}((a, v_j), (u_i, v_j))\neq d_{G\boxtimes H}((a, v_j),(u_k, v_l))$. By Remark \ref{strem}, we have $d_G(a, u_i)\neq d_G(a, u_k)$, therefore $a\in L_G(u_iu_k)$. Thus $(a, v_j)\in \{L_G(u_iu_k)\times V(H)\}$. Finally, if $u_iu_k\in E(G)$ and $v_jv_l\in E(H)$, then two vertices $(u_i,v_j)$ and $(u_k,v_l)$ are locally resolved by either $(a,v_j)$ or $(u_i,b)$ or both. Let $(a,v_j)\in L_{G\boxtimes H}((u_i,v_j)(u_k,v_l))$, we
have
$$d_{G\boxtimes H}((u_i, v_j), (a, v_j)) = d_G(u_i, a)\neq d_G(u_k, a)$$
$$= max\{d_G(u_k, a), 1\} = d_{G\boxtimes H}((a, v_j), (u_k, v_l)).$$
Thus, $(a,v_j)\in \{L_G(u_iu_k)\times V(H)\}.$ Similar arguments hold for $(u_i,b)\in L_{G\boxtimes H}((u_i,v_j)(u_k,v_l))$.  Hence, $(a,vj),(u_i,b)\in \{V(G)\times L_H(v_jv_l)\}\cup \{L_G(u_iu_k)\times V(H)\}$ and we have the desired result.
\end{proof}
Now, we discuss some results involving the diameter or the radius of
$G$. For any two vertices $x$ and $y$ in a connected graph $G$, the
collection of all vertices which lie on an $x - y$ path of the shortest length is
known as the interval $I[x, y]$ between $x$ and $y$. Given a
non-negative integer $k$, we say that $G$ is adjacency $k-$resolved
if for every two adjacent vertices $x, y\in V(G)$, there exists
$w\in V(G)$ such that $d_G(y,w)\geq k$ and $x\in I[y,w],$ or
$d_G(x,w)\geq k$ and $y\in I[x,w]$. For example, path graphs and
cyclic graphs of order $n\geq2$ are adjacency $\lceil
\frac{n}{2}\rceil-$resolved.
\begin{Lemma}\label{s2}
Let $G$ be a non-trivial graph of diameter $diam(G)< k$ and let $H$
be an adjacency $k-$resolved graph of order $n_2$ and let
$(u_i,vj)(u_r,v_l)\in E(G\boxtimes H)$. Then
$$L_{G\boxtimes H}((u_i,v_j)(u_r,v_l))\subseteq \{L_G(u_iu_r)\times V(H)\}.$$
\end{Lemma}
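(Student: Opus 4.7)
My plan is to prove the containment by contrapositive: given $(a,b)\in V(G)\times V(H)$ with $a\notin L_G(u_iu_r)$, I will show that $(a,b)$ fails to locally resolve the edge $(u_i,v_j)(u_r,v_l)$. Setting $D:=d_G(a,u_i)=d_G(a,u_r)$, the hypothesis $\mathrm{diam}(G)<k$ gives $D<k$.

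By Remark~\ref{strem}, the nonresolving condition I must verify is
\[
\max\{D,\,d_H(b,v_j)\}=\max\{D,\,d_H(b,v_l)\}.
\]
I would split into cases according to which clauses of the strong-product adjacency rule apply. When $v_j=v_l$, both sides are identically equal, and this is in fact already covered by the second clause of Lemma~\ref{sl}. When $v_jv_l\in E(H)$, the two $H$-distances differ by at most $1$, so they are either equal (in which case the maxes trivially match) or satisfy $|d_H(b,v_j)-d_H(b,v_l)|=1$, and I must rule out that the larger of the two exceeds $D$.

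To handle the latter sub-case, I would invoke the adjacency $k$-resolved hypothesis on $H$ for the adjacent pair $v_j,v_l$: there exists a witness $w\in V(H)$ with $d_H(v_l,w)\ge k$ and $v_j\in I[v_l,w]$ (or its symmetric analogue), so $d_H(v_j,w)=d_H(v_l,w)-1\ge k-1$. Using this interval data together with triangle-inequality estimates for $d_H(b,\cdot)$, the plan is to argue that if either $d_H(b,v_j)>D$ or $d_H(b,v_l)>D$ while the two distances disagree, then one can propagate $b$ along the geodesic through $v_j$ toward $w$ and use $D<k\leq d_H(v_l,w)$ to reach a contradiction. Hence both $H$-distances are bounded by $D$, the maxes collapse to $D$, and the desired equality holds.

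The main obstacle will be the precise interval/geodesic argument connecting the abstract witness $w$ to the concrete vertex $b$: it is exactly here that the combined strength of $\mathrm{diam}(G)<k$ and $H$ being adjacency $k$-resolved is consumed. Once that connection is made, the contrapositive closes and the inclusion $L_{G\boxtimes H}((u_i,v_j)(u_r,v_l))\subseteq L_G(u_iu_r)\times V(H)$ follows.
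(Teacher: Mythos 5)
Your plan founders at exactly the step you flag as ``the main obstacle'': ruling out that the larger of $d_H(b,v_j)$, $d_H(b,v_l)$ exceeds $D$ when the two differ. That step cannot be carried out, because the inclusion you are trying to prove is false. Take $G=K_3$ with vertices $u_1,u_2,u_3$, $H=P_3$ with path $v_1v_2v_3$, and $k=2$: then $diam(G)=1<2$ and $H$ is adjacency $2$-resolved ($v_3$ witnesses the edge $v_1v_2$, and $v_1$ witnesses $v_2v_3$). For the edge $(u_1,v_1)(u_2,v_2)$ of $G\boxtimes H$ we have $L_G(u_1u_2)=\{u_1,u_2\}$, yet
\[
d_{G\boxtimes H}((u_3,v_3),(u_1,v_1))=\max\{1,2\}=2\neq 1=\max\{1,1\}=d_{G\boxtimes H}((u_3,v_3),(u_2,v_2)),
\]
so $(u_3,v_3)\in L_{G\boxtimes H}((u_1,v_1)(u_2,v_2))$ but $(u_3,v_3)\notin L_G(u_1u_2)\times V(H)$. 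In your notation this is precisely the configuration you need to exclude: $a=u_3\notin L_G(u_1u_2)$, $D=1$, $d_H(b,v_j)=2>D$, $d_H(b,v_l)=1$, and the two maxima genuinely differ. The adjacency-$k$-resolved hypothesis only guarantees a far-away witness $w$ lying on a geodesic through $v_j$ or $v_l$; it imposes no bound on $d_H(b,\cdot)$ for an arbitrary $b$, so no triangle-inequality propagation from $w$ to $b$ can close the contrapositive.

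You should not feel singled out: the paper's own proof does not establish the stated inclusion either. Its Case 2 takes the witness $v$ and shows that $(u,v)$ resolves the edge for \emph{every} $u\in V(G)$ (the computation never uses $u\in L_G(u_iu_r)$), which is a containment in the opposite direction and, per the example above, actually contradicts the lemma whenever $L_G(u_iu_r)\neq V(G)$. The statement that is both true and sufficient for the subsequent bound $ldim_f(G\boxtimes H)\le n_2\cdot ldim_f(G)$ is the reverse one: there exists $v\in V(H)$ with $L_G(u_iu_r)\times\{v\}\subseteq L_{G\boxtimes H}((u_i,v_j)(u_r,v_l))$ (combined with Lemma~\ref{sl} for the layer edges). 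If you reorient your argument toward that inclusion, the witness-$w$ computation you sketch is essentially the right calculation; as written, your contrapositive has an unfillable gap.
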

\begin{proof}
Let $L_{G\boxtimes H}((u_i,v_j)(u_r, v_l))$ be the local resolving neighborhood of $(u_i,v_j)(u_r, v_l)\in E(G\boxtimes
H)$. We differentiate the following two cases.\\
\textbf{Case 1}: If $j = l$, then $u_iu_r\in E(G)$. Let $(u, v_j)\in L_{G\boxtimes H}((u_i,v_j)(u_r, v_j))$ then $d_{G.\boxtimes H}((u_i, v_j),(u, v_j))\neq d_{G\boxtimes H}((u_r,v_j),(u,v_j))$. By Remark \ref{strem}, we have $d_G(u_i, u)\neq
d_G(u_r, u)$, thus $u\in L_G(u_iu_r)$.\\
\textbf{Case 2}: If $v_j v_l\in E(H)$. Since $H$ is adjacency
$k-$resolved, there exists $v\in V(H)$ such that $(d_H(v, v_l)\geq
k$ and $v_j \in I[v, v_l])$ or $(d_H(v, v_j)\geq k$ and $v_l\in I[v,
v_j])$. Say $d_H(v, v_l)\geq k$ and $v_j\in I[v, v_l].$ In such a
case, as $diam(G) < k,$ for every $u\in L_G(u_iu_r)$ we have
$d_{G\boxtimes H}((u_i, v_j), (u, v)) = max\{d_G(u_i, u), d_H(v_j ,
v)\} < d_H(v, v_l) = max{d_G(u, u_r), d_H(v, v_l)} = d_{G\boxtimes
H}((u_r, v_l), (u, v)).$\\
Hence, $L_{G\boxtimes H}((u_i,v_j)(u_r,v_l))\subseteq
\{L_G(u_iu_k)\times V(H)\}.$
\end{proof}

\begin{Theorem}
Let $G$ be a non-trivial graph of diameter $diam(G)< k$ and let $H$
be an adjacency $k-$resolved graph of order $n_2$. Then
$$ldim_f(G\boxtimes H)\leq n_2.ldim_f(G)$$
\end{Theorem}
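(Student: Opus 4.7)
The plan is to lift an optimal local resolving function of $G$ to $G\boxtimes H$ by copying weights across every $H$-layer. Concretely, let $g:V(G)\to[0,1]$ be a local resolving function of $G$ with $|g|=ldim_f(G)$, and define $f:V(G\boxtimes H)\to[0,1]$ by $f((u,v))=g(u)$ for all $u\in V(G)$ and $v\in V(H)$. Then $|f|=\sum_{v\in V(H)}\sum_{u\in V(G)}g(u)=n_2\cdot ldim_f(G)$, so the bound follows once $f$ is shown to be a local resolving function of $G\boxtimes H$.

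To verify the local resolving condition I would handle the edges of $G\boxtimes H$ in two cases. First, for an edge $(u_i,v_j)(u_r,v_j)$ with $u_iu_r\in E(G)$ and equal second coordinates, Remark \ref{strem} collapses every strong-product distance $d_{G\boxtimes H}((u,v_j),(u_s,v_j))$ to $d_G(u,u_s)$, so $(u,v_j)$ locally resolves the pair whenever $u\in L_G(u_iu_r)$. Hence $L_G(u_iu_r)\times\{v_j\}\subseteq L_{G\boxtimes H}((u_i,v_j)(u_r,v_j))$ and therefore $f(L_{G\boxtimes H})\geq g(L_G(u_iu_r))\geq 1$.

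Second, for an edge $(u_i,v_j)(u_r,v_l)$ with $v_jv_l\in E(H)$, I would invoke the adjacency $k$-resolved hypothesis on $H$ to pick $v\in V(H)$ with, say, $d_H(v,v_l)\geq k$ and $v_j\in I[v,v_l]$, so that $d_H(v,v_j)=d_H(v,v_l)-1\geq k-1$. Since $diam(G)<k$, every $G$-distance is at most $k-1\leq d_H(v,v_j)$ and strictly less than $d_H(v,v_l)$; Remark \ref{strem} then gives $d_{G\boxtimes H}((u,v),(u_i,v_j))=d_H(v,v_j)$ and $d_{G\boxtimes H}((u,v),(u_r,v_l))=d_H(v,v_l)$ for every $u\in V(G)$, and these differ. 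Consequently $V(G)\times\{v\}\subseteq L_{G\boxtimes H}((u_i,v_j)(u_r,v_l))$ and $f(L_{G\boxtimes H})\geq |g|=ldim_f(G)\geq 1$. This single argument simultaneously covers the subcase $u_i=u_r$ and the diagonal subcase $u_iu_r\in E(G)$.

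The main technical point is this second case, where a single $v\in V(H)$ must serve as a resolver regardless of the $G$-coordinate. The adjacency $k$-resolved property supplies such a $v$, and the diameter bound $diam(G)<k$ is exactly what makes $d_H(v,v_l)\geq k$ dominate every $G$-distance, reducing the max-formula of Remark \ref{strem} to a comparison of two $H$-distances that differ by one because $v_j\in I[v,v_l]$. Once this uniform domination is secured, the two-case verification and the weight computation are routine, and the symmetric adjacency-resolved branch where $d_H(v,v_j)\geq k$ and $v_l\in I[v,v_j]$ is handled by the identical argument with the roles of $v_j$ and $v_l$ exchanged.
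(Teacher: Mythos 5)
Your proof is correct and takes essentially the same route as the paper: both lift an optimal local resolving function $g$ of $G$ to $G\boxtimes H$ by copying it across all $H$-layers, i.e. $f((u,v))=g(u)$, so that $|f|=n_2\cdot ldim_f(G)$. Your two-case verification that $f$ is a local resolving function is exactly the content of the paper's Lemma \ref{s2} (the adjacency-$k$-resolved vertex $v$ whose $H$-distance dominates every $G$-distance), which the paper's proof invokes only implicitly with ``Note that $h$ is a local resolving function''; you have simply written out that step in full.
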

\begin{proof}
Let $(x,y)\in E(G\boxtimes H)$. Let $g : V (G)\rightarrow [0,1]$ be
a local resolving function of $G$ with $|g|=ldim_f(G)$.
We define a function $h:V(G\boxtimes H)\rightarrow[0,1],$
$$(x,y)\mapsto\left\{
 \begin{array}{ll}
    g(x), & \hbox{if $(x,y)\in G^{y}$,}\\
    0, & \hbox{otherwise.}
  \end{array}
  \right. $$
Note that $h$ is a local resolving function of $G\boxtimes
H$. Since $G$ has $n_2$ copies in $G\boxtimes H$, therefore $|h| \leq n_2.ldim_f(G)$. Hence, $ldim_f
(G\boxtimes H)\leq n_2.ldim_f (G)$.
\end{proof}
\begin{Theorem}\label{sthm}
Let $G$ and $H$ be two graphs of order $n_1\geq2$ and $n_2\geq2,$
respectively. Then
$$2\leq ldim_f(G\boxtimes H)\leq n_1.ldim_f(H)+n_2.ldim_f(G)-2ldim_f(G).ldim_f(H).$$
\end{Theorem}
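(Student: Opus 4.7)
The plan is to upper-bound $ldim_f(G\boxtimes H)$ by constructing an explicit local resolving function of $G\boxtimes H$ from optimal local resolving functions $g$ of $G$ and $h$ of $H$. Taking $g$ and $h$ with values in $[0,1/2]$ (which can always be arranged by redistributing mass on tight constraints without changing the weight), I would define $f:V(G\boxtimes H)\to[0,1]$ by
\[
f((u,v))=g(u)+h(v)-2g(u)h(v)=g(u)(1-h(v))+h(v)(1-g(u)).
\]
As a sum of two non-negative quantities each at most $1/2$, this lies in $[0,1]$, and by linearity
\[
|f|=n_2|g|+n_1|h|-2|g|\,|h|=n_1\cdot ldim_f(H)+n_2\cdot ldim_f(G)-2\cdot ldim_f(G)\cdot ldim_f(H),
\]
matching the claimed bound.

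To show that $f$ is a local resolving function, I would run a case analysis on the three edge types from Lemma \ref{sl}. For a type 1 edge $e=(u_1,v)(u_2,v)$, a direct distance check via Remark \ref{strem} gives $L_G(u_1u_2)\times\{v\}\subseteq L_{G\boxtimes H}(e)$, and hence
\[
f\bigl(L_G(u_1u_2)\times\{v\}\bigr)=g(L_G(u_1u_2))(1-2h(v))+|L_G(u_1u_2)|\cdot h(v)\geq 1,
\]
using $g(L_G(u_1u_2))\geq 1$, $|L_G(u_1u_2)|\geq 2$ (Lemma \ref{twin}), and $h(v)\leq 1/2$. Type 2 edges are symmetric. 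For a type 3 edge $e=(u_1,v_1)(u_2,v_2)$ one carves out an explicit subset of $L(e)$ from $L_G(u_1u_2)\times\{v_1,v_2\}$ and $\{u_1,u_2\}\times L_H(v_1v_2)$ that survives the distance comparison, and the same reduction to $g$-mass and $h$-mass completes the bound.

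For the lower bound $ldim_f(G\boxtimes H)\geq 2$, I would exhibit two edges of $G\boxtimes H$ whose local resolving neighborhoods are disjoint; summing the two constraints $f(L(e_i))\geq 1$ then forces $|f|\geq 2$. The natural candidates are two type 1 edges in distinct $H$-layers, namely $(u_1,v)(u_2,v)$ and $(u_1,v')(u_2,v')$ for $u_1u_2\in E(G)$ and distinct $v,v'\in V(H)$ (which exist since $n_1,n_2\geq 2$); the distance identity of Remark \ref{strem} combined with Lemma \ref{sl} confines each of the two local resolving neighborhoods to a narrow strip around its own layer and yields the required disjointness (in particular, $(u_1,v)$ and $(u_2,v)$ always lie in the first neighborhood but not the second, and symmetrically for $v'$). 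The main obstacle is the case 3 verification in the upper bound, where the inclusion of Lemma \ref{sl} is strict in one direction, so one must identify an explicit subset of $L_{G\boxtimes H}(e)$ with $f$-mass at least $1$; the uniform bound $g,h\leq 1/2$ is what makes all three case estimates reduce cleanly to the factor constraints $g(L_G)\geq 1$ and $h(L_H)\geq 1$.
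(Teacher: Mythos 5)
Your upper-bound construction is more honest than the paper's (which takes $h(u,v)=g_1(u)+g_2(v)$, computes $|h|=n_1\,ldim_f(H)+n_2\,ldim_f(G)$, and then the subtracted cross term in the stated bound appears without derivation), and your choice $f((u,v))=g(u)+h(v)-2g(u)h(v)$ does have exactly the advertised weight. But the whole verification hinges on the claim that optimal local resolving functions $g,h$ can be taken with all values in $[0,1/2]$, and your parenthetical justification (``redistributing mass on tight constraints without changing the weight'') is not a proof. For a general covering LP whose constraint sets all have size at least $2$ this normalization is simply false: with constraint sets $\{x,a\}$ and $\{x,b\}$ the unique optimum puts weight $1$ on $x$, and forcing $f(x)\le 1/2$ raises the optimal value from $1$ to $3/2$. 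So you would need a structural argument about local resolving neighborhoods (e.g.\ exploiting that $L(uv)=\{u,v\}$ forces $u,v$ to be true twins, so such size-$2$ sets come in cliques) to rule this out, and no such argument is given. Without $h(v)\le 1/2$ your type-1 estimate $g(L_G(u_1u_2))(1-2h(v))+|L_G(u_1u_2)|\,h(v)\ge 1$ genuinely fails (take $|L_G|=2$, $g\equiv 1$ on $L_G$, $h(v)=3/4$: the sum is $1/2$). This is the central gap in the upper bound.

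The lower bound argument also breaks. The two layer edges $e_1=(u_1,v)(u_2,v)$ and $e_2=(u_1,v')(u_2,v')$ do \emph{not} have disjoint local resolving neighborhoods in general: a vertex $(a,w)$ lies in $L(e_i)$ exactly when $a\in L_G(u_1u_2)$ and $d_H(w,\cdot)<\max\{d_G(a,u_1),d_G(a,u_2)\}$, so any $a$ with $\max\{d_G(a,u_1),d_G(a,u_2)\}\ge 2$ produces common elements. Concretely, for $G=P_3$ with $w_0\sim u_1\sim u_2$ and $H=K_2=\{v,v'\}$, the vertex $(w_0,v)$ has distances $1,2$ to the endpoints of $e_1$ and $1,2$ to the endpoints of $e_2$, so it lies in $L(e_1)\cap L(e_2)$. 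Your parenthetical observation that each edge's endpoints avoid the other edge's neighborhood is correct but does not yield disjointness, and disjointness is what your ``sum the two constraints'' step requires. (In fairness, the paper's own justification of the lower bound is only the remark that $ldim_f(P_2\boxtimes P_2)=2$, which is not a proof for general $G,H$ either; a correct argument needs a different pair of constraints or a dual/LP certificate, e.g.\ locating edges of $G\boxtimes H$ whose local resolving neighborhoods really are disjoint, which your chosen pair is not.)
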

\begin{proof}
Since $P_2\boxtimes P_2= K_4$ and $ldim_f(P_2\boxtimes P_2)=2$. So,
the lower bound follows. Let $(u,v)\in V(G\boxtimes H)$. Let $g_1 :
V (G)\rightarrow [0,1]$ be a local resolving function of $G$ with $|g_1|=ldim_f(G)$ and $g_2 : V(H)\rightarrow [0,1]$ be a local resolving function of $H$ with $|g_2|=ldim_f(H)$.
We define a function $h:V(G\boxtimes H)\rightarrow[0,1],$ with
$h(u,v)=g_1(u)+g_2(v)$. Note that $h$ is a local resolving function
of $G\boxtimes H$. Since $G$ has $n_2$ and $H$ has $n_1$ copies in $G\boxtimes H$, therefore $|h|=
n_1.ldim_f(H)+n_2.ldim_f(G)$.
Hence, $ldim_f(G\boxtimes H)\leq
n_1.ldim_f(H)+n_2.ldim_f(G)-2ldim_f(G).ldim_f(H)$.
\end{proof}
For the sharpness of upper bound in Theorem \ref{sthm}, let $G =K_n$ and $H= K_m$. Since $K_n\boxtimes K_m\cong K_{nm}$, therefore
$$ldim_f(K_n\boxtimes K_m)=\frac{nm}{2}\\
=n.ldim_f(K_m)+m.ldim_f(K_n)-2ldim_f(K_n).ldim_f(K_m).$$

 Now, we discuss general bounds for the fractional local metric
dimension of cartesian product of graphs. The cartesian product of
two graphs $G$ and $H$, denoted by $G\square H$, is a graph with the
vertex set $V(G\square H)=\{(u,v): u\in V(G)\,\, and \,\, v\in
V(H)\}$ and two vertices $(u_{1},v_{1})$ and $(u_{2},v_{2})$ in
$G\square H$ are adjacent if and only if
\begin{itemize}
 \item $u_{1}u_{2}\in E(G)$ and $v_{1}=v_{2}$ in $H$ or
 \item $u_{1}=u_{2}$ in $G$ and $v_{1}v_{2}\in E(H)$.
\end{itemize}
\begin{Remark}\label{carRem}\cite{11}
Let $G$ and $H$ be two connected graphs. Then
$$d_{G\square H}((u_1,
v_1),(u_2, v_2)) = d_G(u_1, u_2)+d_H(v_1, v_2).$$
\end{Remark}
\begin{Lemma}\label{lcar}
Let $G$ and $H$ be two graphs, then
$$L_{G\square H}((u_i,v_j)(u_k,v_l))=\left\{
  \begin{array}{ll}
    \bigcup\limits_{v\in L_H(v_jv_l)}\bigcup\limits_{u\in V(G)}\{uv\},   \,\,\,\     & \mbox{if}\,\,\,i= k,\\
    \bigcup\limits_{u\in L_G(u_iu_k)}\bigcup\limits_{v\in V(H)}\{uv\},   \,\,\,\     & \mbox{if}\,\,\,j=l.
  \end{array}
\right.$$
\end{Lemma}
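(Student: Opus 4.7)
The plan is to derive this set equality directly from the additive distance formula for $G\square H$ given in Remark~\ref{carRem}. Since the two cases of the statement are completely symmetric, I will carry out the argument in the case $i=k$; the case $j=l$ will then follow by interchanging the roles of $G$ and $H$. Note that, by the adjacency rule of the Cartesian product, an edge $(u_i,v_j)(u_k,v_l)\in E(G\square H)$ with $i=k$ forces $v_jv_l\in E(H)$, which legitimises writing $L_H(v_jv_l)$.

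In Case 1, I will fix an arbitrary vertex $(u,v)\in V(G\square H)$ and apply Remark~\ref{carRem} to obtain
$$d_{G\square H}((u,v),(u_i,v_j))=d_G(u,u_i)+d_H(v,v_j)$$
and
$$d_{G\square H}((u,v),(u_i,v_l))=d_G(u,u_i)+d_H(v,v_l).$$
The common summand $d_G(u,u_i)$ cancels, so the two distances differ if and only if $d_H(v,v_j)\neq d_H(v,v_l)$, i.e., if and only if $v\in L_H(v_jv_l)$. The first coordinate $u$ therefore plays no role whatsoever, and $(u,v)\in L_{G\square H}((u_i,v_j)(u_i,v_l))$ precisely when $u$ is arbitrary in $V(G)$ and $v\in L_H(v_jv_l)$. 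This yields exactly the claimed union, and establishes the first case; the second case is entirely analogous, with the roles of the two coordinates swapped.

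I do not anticipate a genuine obstacle here; the proof is a one-step calculation exploiting the additivity of the Cartesian-product distance. The only point worth emphasising is that the equality in this lemma is stronger than the mere inclusion in its strong-product analogue (Lemma~\ref{sl}), and the reason is structural: there are no edges of the form $(u_i,v_j)(u_k,v_l)$ with both $i\ne k$ and $j\ne l$ in $G\square H$, so the third, \emph{mixed} case that complicated Lemma~\ref{sl} simply cannot arise. Care should be taken to write out the two cases separately and to keep the indexing of coordinates straight, but beyond that the argument is routine.
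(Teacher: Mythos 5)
Your proof is correct and follows essentially the same route as the paper: both reduce membership in $L_{G\square H}$ to membership in $L_H(v_jv_l)$ (resp.\ $L_G(u_iu_k)$) via the additive distance formula of Remark~\ref{carRem}. If anything, your version is slightly cleaner, since you treat an arbitrary first coordinate $u$ and observe that the summand $d_G(u,u_i)$ cancels, whereas the paper's write-up only displays vertices of the form $(u_i,v)$.
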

\begin{proof}
For $(u_i, v_j)(u_k, v_l)\in E(G\square H)$ if $i = k,$ then
$v_jv_l\in E(H)$. Let $(u_i,v)\in L_{G\square  H}((u_i,v_j)(u_i,v_l))$, then
$d_{G\square H}((u_i, v), (u_i, v_j))\neq d_{G\square H}((u_i, v),(u_i, v_l))$. By Remark \ref{carRem}, we have $d_H(v, v_j)\neq d_H(v,
v_l)$, therefore $v\in L_H(v_jv_l)$. Thus $(u_i, v)\in \bigcup\limits_{v\in L_H(v_jv_l)}\bigcup\limits_{u\in V(G)}\{uv\} $. Now let $(u_i, v)\in \bigcup\limits_{v\in L_H(v_jv_l)}\bigcup\limits_{u\in V(G)}\{uv\}$, then $d_H(v,v_j)\neq d_H(v,v_l)$. By Remark \ref{carRem}, we have $d_{G\square H}((u_i, v), (u_i, v_j))\neq d_{G\square H}((u_i, v),(u_i, v_l))$. Thus $(u_i,v)\in L_{G\square  H}((u_i,v_j)(u_i,v_l))$.
Similar arguments hold for $j = l$. Hence, we have the desired result.

\end{proof}
\begin{Theorem}
Let $G$ and $H$ be two graphs. Then $ldim_f(G\square H)\geq
ldim_f(G).$
\end{Theorem}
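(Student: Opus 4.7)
The plan is to start with any optimal local resolving function $h: V(G \square H) \to [0,1]$ with $|h| = ldim_f(G\square H)$ and project it down to a local resolving function $g$ of $G$ whose weight is no larger. The natural projection is to aggregate $h$ along the column $\{u\}\times V(H)$ above each vertex $u\in V(G)$, capped at $1$ so that it lands in $[0,1]$. Concretely, I define
$$g(u) \;=\; \min\!\left\{1,\; \sum_{v\in V(H)} h((u,v))\right\}.$$
The clamp at $1$ is mandatory, since a raw column sum could exceed $1$; this is the only subtlety in the construction.

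Next I verify that $g$ is a local resolving function of $G$. Fix any edge $u_iu_k\in E(G)$ and pick any $v_0\in V(H)$. Then $(u_i,v_0)(u_k,v_0)\in E(G\square H)$, and the case $j=l$ of Lemma \ref{lcar} gives
$$L_{G\square H}\bigl((u_i,v_0)(u_k,v_0)\bigr) \;=\; L_G(u_iu_k)\times V(H).$$
Since $h$ is a local resolving function of $G\square H$, we obtain
$$\sum_{u\in L_G(u_iu_k)}\sum_{v\in V(H)} h((u,v)) \;\ge\; 1.$$
If the column sum $\sum_v h((u,v))$ already meets or exceeds $1$ for some $u\in L_G(u_iu_k)$, then $g(u)=1$ and $g(L_G(u_iu_k))\ge 1$; otherwise the cap never binds on $L_G(u_iu_k)$ and $g(L_G(u_iu_k))$ equals the double sum above, which is $\ge 1$. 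In either case $g(L_G(u_iu_k))\ge 1$, so $g$ is indeed a local resolving function of $G$.

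Finally, I compare weights. Because $\min\{1,x\}\le x$ for $x\ge 0$,
$$|g| \;=\; \sum_{u\in V(G)} g(u) \;\le\; \sum_{u\in V(G)}\sum_{v\in V(H)} h((u,v)) \;=\; |h| \;=\; ldim_f(G\square H),$$
and by definition $ldim_f(G)\le |g|$, giving $ldim_f(G)\le ldim_f(G\square H)$. The main (mild) obstacle is the clamp: without it the construction is immediate, but $g$ would not be a legal function into $[0,1]$; with it one has to verify the resolving condition by the short case split above. Everything else follows routinely from Lemma \ref{lcar}.
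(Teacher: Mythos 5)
Your proof is correct and follows essentially the same route as the paper: both define the projected function $g(u)=\min\{1,\sum_{v\in V(H)}h((u,v))\}$, verify the local resolving condition via Lemma \ref{lcar} with the same case split on whether the cap binds, and compare weights. Your write-up is in fact slightly more careful than the paper's about why the clamp does not break the resolving condition.
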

\begin{proof}
Let $f$ be a local resolving function of $G\square H$ with $|f|=ldim_f(G\square H)$. We define a function $f_G:V(G)\rightarrow [0,1]$ such that $f_G(u)=\min\{1,\sum\limits_{v\in V(H)}f(u,v)\}$. For $u_1u_2\in E(G)$, we show that $f_G(L_G(u_1u_2))\geq 1$. If there exists an $x\in L_G(u_1u_2)$ with $f_G(x)=1$, then $f_G(L_G(u_1u_2))\geq 1$. Now, let for any $u\in V(G)$, $f_G(u)=\sum\limits_{v\in V(H)}f(u,v)$. Then
$$f_G(L_G(u_1u_2))= \sum\limits_{u\in L_G(u_1u_2)}\sum\limits_{v\in V(H)}f(u,v)$$
By Lemma \ref{lcar},
$$=f(L((u_1,v_0)(u_2,v_0))\geq 1,$$
Thus $f_G$ is a local resolving function of $G$. Since
$$|f_G|\leq \sum\limits_{u\in V(G)}\sum\limits_{v\in V(H)}f(u,v)=|f|,$$
hence $ldim_f(G\square H)\geq
ldim_f(G).$
\end{proof}
Since grid graph $P_n\square P_t$ is a bipartite graph and by Lemma \ref{b1}, we deduce\\
$ldim_f(P_n\square P_t)=1.$
\begin{Lemma}\label{um}
Let $G$ be a graph of order $n$, then $ldim_f (K_2\square G)\leq
ldim_f(G).$
\end{Lemma}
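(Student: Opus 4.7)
The plan is to exhibit an explicit local resolving function of $K_2\square G$ whose weight equals $ldim_f(G)$, which directly yields the inequality. Concretely, I would fix a local resolving function $f$ of $G$ with $|f|=ldim_f(G)$ and split its weight evenly between the two copies of $G$ in $K_2\square G$. Write $V(K_2)=\{0,1\}$ and define $h:V(K_2\square G)\to[0,1]$ by $h(i,u)=\tfrac{1}{2}f(u)$ for $i\in\{0,1\}$ and $u\in V(G)$. Then
$$|h|=\sum_{u\in V(G)}\bigl(h(0,u)+h(1,u)\bigr)=\sum_{u\in V(G)}f(u)=ldim_f(G),$$
so once $h$ is shown to be a local resolving function of $K_2\square G$ the conclusion follows.

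To verify the local resolving condition I would split the edges of $K_2\square G$ into two types and apply Lemma \ref{lcar}. For an edge of the form $(i,u)(i,v)$ with $uv\in E(G)$, Lemma \ref{lcar} gives $L_{K_2\square G}((i,u)(i,v))=\{0,1\}\times L_G(uv)$, and hence
$$h\bigl(L_{K_2\square G}((i,u)(i,v))\bigr)=\sum_{w\in L_G(uv)}\bigl(h(0,w)+h(1,w)\bigr)=f(L_G(uv))\geq 1$$
because $f$ is a local resolving function of $G$. For an edge of the form $(0,u)(1,u)$, Lemma \ref{lcar} with $j=l$ gives $L_{K_2\square G}((0,u)(1,u))=L_{K_2}(01)\times V(G)$, and since $K_2$ is bipartite Lemma \ref{b1} yields $L_{K_2}(01)=V(K_2)=\{0,1\}$, so the local resolving neighborhood is all of $V(K_2\square G)$ and
$$h\bigl(L_{K_2\square G}((0,u)(1,u))\bigr)=|h|=ldim_f(G)\geq 1,$$
using the bound $ldim_f(G)\geq 1$ established in the paper.

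There is essentially no obstacle; the only subtle point worth flagging is the cross-edge case $(0,u)(1,u)$, which is covered only because $ldim_f(G)\geq 1$ (equivalently, because $G$ is connected of order $n\geq 2$, hence has at least one edge to force $|f|\geq 1$). Combining both cases shows $h$ is a local resolving function of $K_2\square G$, so $ldim_f(K_2\square G)\leq |h|=ldim_f(G)$, as claimed.
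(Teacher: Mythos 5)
Your proof is correct and is essentially the same as the paper's: both split a minimum local resolving function $f$ of $G$ evenly over the two $G$-layers, handle the layer edges via $L_{K_2\square G}((i,u)(i,v))=V(K_2)\times L_G(uv)$, and handle the cross edges $(0,u)(1,u)$ by noting their local resolving neighborhood is all of $V(K_2\square G)$ together with $|f|\geq 1$. Your write-up is in fact slightly more careful than the paper's in citing Lemma \ref{lcar} for both edge types and in flagging why the cross-edge case needs $ldim_f(G)\geq 1$.
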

\begin{proof}
Let $V(K_2)=\{x, y\}$, $V(G) =\{u_1, u_2, . . . , u_n\}$ and $H =
K_2\square G.$ Then $V (H) = \{(x, u_i), (y, u_i) : i = 1, 2, . . .
, n\}.$ Let $f$ be a local resolving function of $G$ with $|f| =
ldim_f (G).$ Now we define $g : V (H) \rightarrow [0, 1]$ by $g((x,
u_i)) = g((y, u_i)) = \frac{f(u_i)}{2}, i =1, 2, . . . , n.$ We
claim that $g$ is a local resolving function for $H$. Let $uv\in
E(H)$, if $u=(x, u_i)$ and $v = (x, u_j ),$ then $\{\{x\}\times
L_G(u_i u_j)\}\subseteq L_H(u v)$ and hence $g(L_H(uv))\geq
f(L_G(u_iu_j))\geq 1.$ If $u = (x, u_i)$ and $v = (y, u_i),$ then
$L_H(uv) = V (H)$ and hence $g(L_H(uv))\geq 1.$ Thus, $g$ is a
local resolving function of $H$ with $|g| = |f|.$ Hence, $ldim_f (H)
\leq |f| = ldim_f (G).$
\end{proof}

\begin{Remark}
When $G$ is a bipartite graph and an odd cyclic graph, the bound given in Lemma \ref{um} is sharp. If $G$ is bipartite graph, then $ldim_f (K_2\square G) = 1 = ldim_f (G).$ If $n$ is an odd integer with $n\geq3$, then $ldim_f(K_2\square
C_n)=\frac{n}{n-1}.$
\end{Remark}
Let $G$ and $H$ be graphs with $V(H)=n$, Arumugam {\it et al.}
proved that the fractional metric dimension of $G\square H\geq
\frac{n}{2}$ if $dim_f(H)=\frac{n}{2}$ \cite{AM2}. Similar result
holds for the fractional local metric dimension with an alternative
proof as follows:
\begin{Theorem}\label{car1}
Let $G$ and $H$ be two connected graphs with order $m$, $n$
respectively and $ldim_f(H)=\frac{n}{2}$. Then $ldim_f(G\square
H)\geq \frac{n}{2}$.
\end{Theorem}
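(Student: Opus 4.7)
The plan is to exploit the structural characterization of Theorem~\ref{mthm}: the hypothesis $ldim_f(H)=\frac{n}{2}$ is equivalent to every vertex of $H$ having a true twin, which in turn means $V(H)$ partitions into true-twin equivalence classes $O_1,\ldots,O_k$ with $|O_i|=s_i\geq 2$ and $\sum_i s_i=n$, each $O_i$ inducing a clique in $H$. I will push the LRF condition of $G\square H$ down to inequalities about $H$-fibers and then add them up over each class.

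More concretely, let $f$ be a local resolving function of $G\square H$ with $|f|=ldim_f(G\square H)$, and define the fiber-sum
$$F(v)=\sum_{u\in V(G)}f((u,v))\qquad(v\in V(H)),$$
so that $|f|=\sum_{v\in V(H)}F(v)$. The key step is to observe that whenever $v,v'$ are true twins in $H$ (hence adjacent), Lemma~\ref{twin} gives $L_H(vv')=\{v,v'\}$, and then Lemma~\ref{lcar} applied with $u_i=u_k$ yields, for every $u\in V(G)$,
$$L_{G\square H}((u,v)(u,v'))=V(G)\times\{v,v'\}.$$
The local resolving inequality $f\bigl(L_{G\square H}((u,v)(u,v'))\bigr)\geq 1$ then reads precisely
$$F(v)+F(v')\geq 1.$$

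With this inequality in hand, the remaining step is a simple double-counting inside each class $O_i$: summing $F(v)+F(v')\geq 1$ over the $\binom{s_i}{2}$ pairs in $O_i$, each $v\in O_i$ is counted $s_i-1$ times, so
$$(s_i-1)\sum_{v\in O_i}F(v)\geq \binom{s_i}{2}=\frac{s_i(s_i-1)}{2},$$
which (using $s_i\geq 2$) gives $\sum_{v\in O_i}F(v)\geq \frac{s_i}{2}$. Summing over $i$ produces $|f|=\sum_{v\in V(H)}F(v)\geq \sum_{i=1}^{k}\frac{s_i}{2}=\frac{n}{2}$, and the desired inequality $ldim_f(G\square H)\geq \frac{n}{2}$ follows.

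The main obstacle is really only the first step, namely identifying $L_{G\square H}((u,v)(u,v'))$ cleanly on a true-twin edge; but this is immediate from the combination of Lemma~\ref{twin} (which collapses $L_H(vv')$ to $\{v,v'\}$) and the case $i=k$ of Lemma~\ref{lcar} (which lifts this to $V(G)\times\{v,v'\}$). Once that is in place, the rest is routine counting, entirely parallel to the implication (iii)$\Rightarrow$(i) of Theorem~\ref{mthm}.
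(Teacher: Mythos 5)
Your proof is correct and takes essentially the same route as the paper: both arguments reduce, via Theorem~\ref{mthm}, Lemma~\ref{twin} and Lemma~\ref{lcar}, to the fiber-sum inequality $F(v)+F(v')\geq 1$ on true-twin edges of $H$ and then sum these inequalities to obtain $|f|\geq \frac{n}{2}$. Your per-class double counting is in fact a slightly more careful execution of the paper's ``adding these $n$ inequalities'' step, which implicitly requires the chosen twin map $v\mapsto w_v$ to be a fixed-point-free permutation when a twin class has size greater than two.
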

\begin{proof}
Since $ldim_f(H)=\frac{n}{2}$, by Theorem \ref{mthm}, every vertex
of $H$ has a true twin. Let $v$ has a true twin $w$ in $H$ then
$L_H(vw)=\{v,w\}$. By Lemma \ref{lcar}, it follows that $L_{G\square
H}((u,v)(u,w))=\{(x,v):x\in V(G)\}\cup\{(x,w):x\in V(G)\}$.\\
Now, let $f$ be a local resolving function of $G\square
H$. Then $f(L_{G\square H}((u,v)(u,w)))\geq 1$ for all $(u,v)(u,w)\in E(G\square H)$.
Hence $\sum\limits_{x\in V(G)}f((x,v))+\sum\limits_{x\in
V(G)}f((x,w))\geq 1$ for all $vw\in E(H)$. Adding these $n$
inequalities, we get
$$\sum\limits_{x\in V(H)}\sum\limits_{x\in
V(G)}f((x,v))+\sum\limits_{x\in V(G)}f((x,w))\geq n.$$
This implies $2|f|\geq n$. Hence $ldim_f(G\square H)\geq
\frac{n}{2}$.
\end{proof}
\begin{Corollary}\label{car2}
Let $G$ and $H$ be two connected graphs with order $m$, $n$
respectively and $ldim_f(G)=\frac{m}{2}$ and
$ldim_f(H)=\frac{n}{2}$. Then $ldim_f(G\square H)\geq
\max\{ldim_f(G),ldim_f(H)\}$.
\end{Corollary}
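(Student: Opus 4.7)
The plan is to observe that this corollary is an almost immediate consequence of Theorem \ref{car1} together with the symmetry of the cartesian product, so I would not rework the machinery from scratch. The key point to exploit is that $G \square H$ and $H \square G$ are isomorphic graphs, so $ldim_f(G \square H) = ldim_f(H \square G)$; this is what lets a single-sided lower bound be upgraded to the maximum of the two.

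Concretely, I would proceed as follows. First, since we are given $ldim_f(H) = \frac{n}{2}$, I apply Theorem \ref{car1} directly to the pair $(G,H)$ to conclude $ldim_f(G \square H) \geq \frac{n}{2} = ldim_f(H)$. Next, since we are also given $ldim_f(G) = \frac{m}{2}$, I reapply Theorem \ref{car1} with the roles of the two graphs swapped, i.e.\ to the pair $(H,G)$, to obtain $ldim_f(H \square G) \geq \frac{m}{2} = ldim_f(G)$. Using the isomorphism $G \square H \cong H \square G$ to identify $ldim_f(G \square H)$ with $ldim_f(H \square G)$, the second inequality becomes $ldim_f(G \square H) \geq ldim_f(G)$. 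Combining the two yields $ldim_f(G \square H) \geq \max\{ldim_f(G), ldim_f(H)\}$.

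There is no real obstacle here: the only thing one has to be a little careful about is verifying that the hypothesis of Theorem \ref{car1} can indeed be applied in the swapped order, but this is automatic because connectedness of the two factors is a symmetric assumption and the hypothesis $ldim_f(G) = \frac{m}{2}$ plays the role that $ldim_f(H) = \frac{n}{2}$ played in the original theorem. Thus the proof is essentially a two-line invocation of Theorem \ref{car1} combined with commutativity of $\square$.
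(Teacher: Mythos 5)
Your proposal is correct and is exactly the intended derivation: the paper states this as an immediate corollary of Theorem \ref{car1} without writing out a proof, and applying that theorem to both orderings of the factors together with the isomorphism $G\square H\cong H\square G$ is the natural way to obtain the maximum.
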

The bound given in Theorem \ref{car1} is sharp for $H=K_n$ as
follows:
\begin{Theorem}\label{carthm}
Let $G$ be any graph with $|V(G)|<n$, for all $n\geq 3$. Then $ldim_f(G\square
K_n)=\frac{n}{2}$.
\end{Theorem}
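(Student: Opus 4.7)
The plan is to prove matching lower and upper bounds of $n/2$. For the lower bound, I would observe that every pair of distinct vertices of $K_n$ are true twins, so Theorem \ref{mthm} gives $ldim_f(K_n) = n/2$, and applying Theorem \ref{car1} with $H = K_n$ immediately yields $ldim_f(G\square K_n) \geq n/2$.

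For the upper bound I would construct an explicit local resolving function $f$ of weight exactly $n/2$. Writing $V(G) = \{x_1,\ldots,x_m\}$ with $m = |V(G)| < n$ and $V(K_n) = \{v_1,\ldots,v_n\}$, I would set $f((x_i,v_i)) = 1/2$ for $i = 1,\ldots,m$ and $f((x_1,v_j)) = 1/2$ for $j = m+1,\ldots,n$, with $f = 0$ elsewhere. The total weight is $m/2 + (n-m)/2 = n/2$ by inspection, and the strict inequality $m < n$ is what leaves the extra columns of $K_n$ needed to absorb the remaining weight.

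It then remains to verify $f(L(e)) \geq 1$ for every edge $e \in E(G\square K_n)$. Using Lemma \ref{lcar} together with Lemma \ref{twin} applied inside $K_n$ (where all adjacent pairs are true twins, so $L_{K_n}(v_iv_j) = \{v_i,v_j\}$), each edge falls into one of two types: a $K_n$-type edge whose LRN is $V(G)\times\{v_i,v_j\}$, or a $G$-type edge whose LRN is $L_G(x_ix_k)\times V(K_n)$. By construction every column $v_j$ carries total weight exactly $1/2$, so the $K_n$-type edges are covered with equality. For $G$-type edges, Lemma \ref{twin} places both endpoints $x_i, x_k$ into $L_G(x_ix_k)$, and a quick check of the row sums $\sigma(x) = \sum_v f(x,v)$ shows $\sigma(x_k) \geq 1/2$ for every $k$ (with $\sigma(x_1) = (n-m+1)/2$), so the LRN sum is at least $\sigma(x_i) + \sigma(x_k) \geq 1$.

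The main care point is designing $f$ so that both edge types are simultaneously covered while the total weight stays at exactly $n/2$; the strict inequality $|V(G)| < n$ provides precisely the slack required, since without it one could not keep every column sum at $1/2$ while also feeding the $G$-type edge constraints.
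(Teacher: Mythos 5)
Your proof is correct, and the upper bound is handled by a genuinely different construction than the paper's. The lower bound is identical: both you and the paper invoke Theorem \ref{mthm} to get $ldim_f(K_n)=\frac{n}{2}$ and then Theorem \ref{car1}. For the upper bound, the paper shows that every local resolving neighborhood in $G\square K_n$ has size at least $2m$ (exactly $2m$ for the $K_n$-type edges, at least $2n>2m$ for the $G$-type edges) and then takes the \emph{constant} function $f\equiv\frac{1}{2m}$, of weight $\frac{mn}{2m}=\frac{n}{2}$ --- essentially an application of Proposition \ref{1} to the product. You instead build a sparse function supported on $n$ vertices (a diagonal plus the tail of the first row) engineered so that every column sum equals $\frac{1}{2}$ and every row sum is at least $\frac{1}{2}$; combined with Lemma \ref{lcar} and Lemma \ref{twin} this covers both edge types, and your verification is sound. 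The paper's route is shorter and makes the role of $l(G\square K_n)$ transparent; yours has the minor virtue of exhibiting an optimal function with small support, and in fact your diagonal construction (and the paper's, for that matter) does not really need the strict inequality $m<n$ --- it would work verbatim for $m=n$, the case the paper defers to the following theorem --- so your closing remark overstates how essential the slack $n-m\geq 1$ is to the upper bound.
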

\begin{proof}
Let $|V(G)|=m$ with $m< n$. Let $V(G)=\{u_1,u_2,...,u_m\}$ and $V(K_n)=\{v_1,v_2,...,v_n\}$. Since by Theorem \ref{mthm}, $ldim_f(K_n)=\frac{n}{2}$, then by Theorem \ref{car1}, $ldim_f(G\square K_n)\geq \frac{n}{2}$. We claim that $|L_{G\square K_n}((u_i,v_r)(u_j,v_s))|\geq 2m$ for all $(u_i,v_r)(u_j,v_s)\in E(G\square K_n)$. For $(u_i,v_r)(u_j,v_s)\in E(G\square K_n)$, we have two cases. If $i=j$, then $r\neq s$ and by Lemma \ref{lcar}, we have $L_{G\square K_n}((u_i,v_r)(u_i,v_s))=\{(u_t,v_r): 1\leq t \leq m\}\cup \{(u_t,v_s): 1\leq t\leq m\}$. So $|L_{G\square K_n}((u_i,v_r)(u_i,v_s))|= 2m$. If $r=s$, then $i\neq j$ and by Lemma \ref{lcar}, we have $\{(u_i,v_t): 1\leq t \leq n\}\cup \{(u_j,v_t): 1\leq t\leq n\}\subseteq L_{G\square K_n}((u_i,v_r)(u_j,v_r))$. So $|L_{G\square K_n}((u_i,v_r)(u_j,v_r))|\geq 2n> 2m$.

Now the function $f:V(G\square K_n)\rightarrow [0,1]$ defined by $f((u,v))=\frac{1}{2m}$ for all $(u,v)\in V(G\square K_n)$ is a local resolving function of $G\square K_n$ with $|f|=\frac{|V(G\square K_n)|}{2m}=\frac{n}{2}$ and $ldim_f(G\square K_n)\leq \frac{n}{2} $. Hence, $ldim_f(G\square K_n)= \frac{n}{2}$.

\end{proof}
From Corollary \ref{car2}, we have the following result.
\begin{Theorem}
For $2\leq k\leq n$, $n\geq 3$, $ldim_f(K_k\square
K_n)=\frac{n}{2}$.
\end{Theorem}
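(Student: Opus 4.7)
The plan is to prove both inequalities $ldim_f(K_k\square K_n)\geq \frac{n}{2}$ and $ldim_f(K_k\square K_n)\leq \frac{n}{2}$ separately, leveraging results already in the paper.

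For the lower bound, I would observe that in a complete graph every pair of distinct vertices is a true twin pair, so by Theorem \ref{mthm} we have $ldim_f(K_k)=\frac{k}{2}$ and $ldim_f(K_n)=\frac{n}{2}$. Both hypotheses of Corollary \ref{car2} are thus satisfied with $G=K_k$ and $H=K_n$, so
$$ldim_f(K_k\square K_n)\;\geq\;\max\{ldim_f(K_k),ldim_f(K_n)\}\;=\;\max\Bigl\{\tfrac{k}{2},\tfrac{n}{2}\Bigr\}\;=\;\tfrac{n}{2},$$
using the assumption $k\leq n$.

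For the upper bound, I would split into two cases according as $k<n$ or $k=n$. When $k<n$, the statement follows immediately from Theorem \ref{carthm} applied with $G=K_k$, since $|V(K_k)|=k<n$. When $k=n$, Theorem \ref{carthm} does not apply directly, so I would instead compute $l(K_n\square K_n)$ via Lemma \ref{lcar}. Every edge of $K_n\square K_n$ has the form $(u_i,v_r)(u_j,v_s)$ with either $i=j,\;r\neq s$ or $r=s,\;i\neq j$; in both cases Lemma \ref{lcar}, together with the fact that $L_{K_n}(xy)=\{x,y\}$ for any edge of $K_n$, yields a local resolving neighborhood of size exactly $2n$ (a ``row'' together with a ``column'' in the $n\times n$ vertex grid). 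Hence $l(K_n\square K_n)=2n$, and Proposition \ref{1} gives $ldim_f(K_n\square K_n)\leq \frac{n^{2}}{2n}=\frac{n}{2}$.

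The only mild obstacle is the boundary case $k=n$, which falls outside the hypothesis $|V(G)|<n$ of Theorem \ref{carthm}; but the direct argument via $l(K_n\square K_n)=2n$ and Proposition \ref{1} handles it in one line. In fact, one could unify both cases by noting that $l(K_k\square K_n)=2k$ whenever $k\leq n$ (the row-column neighborhoods from the $K_n$-layers give $2k$, and the column-row neighborhoods from the $K_k$-layers give $2n\geq 2k$), so the constant function $f\equiv\frac{1}{2k}$ on $V(K_k\square K_n)$ is a local resolving function of weight $\frac{kn}{2k}=\frac{n}{2}$.
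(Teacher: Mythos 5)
Your proposal is correct and follows essentially the same route as the paper: the lower bound via the true-twin characterization (Theorem \ref{mthm}) feeding into Theorem \ref{car1}/Corollary \ref{car2}, and the upper bound via Theorem \ref{carthm} for $k<n$ together with the constant function $\frac{1}{2n}$ (equivalently, $l(K_n\square K_n)=2n$ and Proposition \ref{1}) for $k=n$. One small slip: the local resolving neighborhoods in $K_n\square K_n$ are two full rows or two full columns of the grid (size $2n$), not ``a row together with a column'' (which would have size $2n-1$), but your count of $2n$ and the resulting bound are correct.
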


\begin{proof}
The result follows from Theorem \ref{carthm}, when $k< n$. Consider the case when $k=n$. Since by Theorem \ref{mthm}, $ldim_f(K_n)=\frac{n}{2}$, then by Theorem \ref{car1}, $ldim_f(K_k\square K_n)\geq \frac{n}{2}$. Let $V(K_k)=\{u_1,u_2,...,u_k\}$ and $V(K_n)=\{v_1,v_2,...,v_n\}$. We claim that $|L_{K_k\square K_n}((u_i,v_r)(u_j,v_s))|\geq 2n$ for all $(u_i,v_r)(u_j,v_s)\in E(K_k\square K_n)$. For $(u_i,v_r)(u_j,v_s)\in E(G\square K_n)$, we have similar cases as in the proof of Theorem \ref{carthm} and we have $|L_{K_k\square K_n}((u_i,v_r)(u_j,v_s))|\geq 2n$.

Now the function $f:V(K_k\square K_n)\rightarrow [0,1]$ defined by $f((u,v))=\frac{1}{2n}$ for all $(u,v)\in V(K_k\square K_n)$ is a local resolving function of $k_k\square K_n$ with $|f|=\frac{n}{2}$ and $ldim_f(K_k\square K_n)\leq \frac{n}{2} $. Hence, $ldim_f(K_k\square K_n)= \frac{n}{2}$.
\end{proof}
\section{Summary and Conclusion}
In this paper, the concept of fractional local metric dimension of graphs
has been introduced. Graphs with $ldim_f(G)=\frac{|V(G)|}{2}$ have been
characterized. The fractional local metric dimension of some families of
graphs have been studied. Differences between the fractional metric dimension and the fractional local metric dimension of graphs have also been investigated. The fractional local metric dimension of strong and cartesian product of graphs have been studied and established some bounds on their fractional local metric dimension. However, it remains to determine the fractional local metric dimension of several other graph products.

\end{document}